\documentclass[11pt,a4paper]{amsart}
\usepackage[text={16cm,24.5cm}]{geometry}

\usepackage{amsfonts, amsmath, amssymb, amsthm}
\usepackage[mathscr]{eucal}
\usepackage{hyperref}
\usepackage{graphicx,color}

\usepackage{booktabs}
\usepackage{bbm}
\usepackage[latin1]{inputenc}
\usepackage{psfrag}
\usepackage{overpic}
\usepackage{enumerate}
\usepackage{subfigure}
\usepackage{verbatim}
\usepackage{floatfig}

\newcommand{\NN}{{\mathbb{N}}}

\newcommand{\RR}{{\mathbb{R}}}
\newcommand{\KK}{{\mathbb{K}}}
\newcommand{\cH}{{\mathcal{H}}}
\newcommand{\cS}{{\mathcal{S}}}
\newcommand{\cT}{{\mathcal{T}}}
\newcommand{\cZ}{{\mathcal{Z}}}

\newcommand{\TA}{{\mathbb{T}}}
\newcommand{\trop}{\mathrm{trop}}

\newcommand{\bZero}{\mathbf{0}}
\newcommand{\SetOf}[2]{\{#1\vphantom{#2} \mid \vphantom{#1}#2\}}
\newcommand{\SetOfbig}[2]{\bigl\{#1\vphantom{#2} \mid \vphantom{#1}#2\bigr\}}

\newcommand{\tconv}{\operatorname{tconv}}

\newcommand{\type}{\operatorname{type}}
\newcommand{\Sym}{\operatorname{Sym}}

\DeclareMathOperator{\V}{Vert}
\DeclareMathOperator{\PV}{PV}
\newtheorem{theorem}{Theorem}

\newtheorem{corollary}[theorem]{Corollary}
\newtheorem{lemma}[theorem]{Lemma}

\newtheorem{example}[theorem]{Example}

\begingroup
\theoremstyle{plain}

\endgroup

\begin{document}

\title[Coarse Types Of Tropical Matroid Polytopes]{Coarse Types Of Tropical Matroid Polytopes}

\author[Kulas]{Katja Kulas}
\address{Fachbereich Mathematik, TU Darmstadt, 64293 Darmstadt, Germany}
\email{kulas@mathematik.tu-darmstadt.de}

\begin{abstract}
  Describing the combinatorial structure of the tropical complex
  $\mathcal{C}$ of a tropical matroid polytope, we obtain a formula for the coarse
  types of the maximal cells of $\mathcal{C}$. Due to the connection between
  tropical complexes and resolutions of monomial ideals, this yields the generators for
  the corresponding coarse type ideal introduced in \cite{DJS09}. Furthermore, a
  complete description of the minimal tropical halfspaces of the uniform tropical 
  matroid polytopes, i.e. the tropical hypersimplices, is given.
\end{abstract}

\maketitle

\section{Introduction}
Tropical matroid polytopes have been introduced in~\cite{DSS2005} as the
tropical convex hull of the cocircuits, or dually, of the bases of a matroid. 
The arrangement of finitely many points $V$ in the
tropical torus $\TA^d$ has a natural decomposition $\mathcal{C}_V$ of $\TA^d$ into (ordinary)
polytopes, the tropical complex, equipped with a (fine) type $T$, which encodes the relative position to the generating
points. The \emph{coarse types} only count the cardinalities of $T$. In~\cite{DS04}, Develin and Sturmfels showed that
the bounded cells of $\mathcal{C}_V$ yield the tropical convex hull
of $V$, which is dual to the regular subdivision $\Sigma$ of a product of two
simplices (or equivalently---due to the Cayley Trick---to the regular mixed subdivisions of a dilated
simplex).  
The authors of ~\cite{BlockYu} and~\cite{DJS09} use the
connection of the cellular structure of $\mathcal{C}_V$ or rather of $\Sigma$ to
minimal cellular resolutions of certain monomial ideals to provide an
algorithm for determining the facial structure of the bounded subcomplex
 of $\mathcal{C}_V$. A main result of~\cite{DJS09} says that the labeled complex
$\mathcal{C}_V$ supports a minimal cellular
resolution of the ideal $I$ generated by monomials
corresponding to the set of all (coarse) types.

The main theme of this paper is the study of the tropical complex of tropical
convex polytopes associated with matroids arising from graphs---the \emph{tropical
matroid polytopes}. 
Recall that a {\it matroid} $M$ is a finite collection $\mathcal{F}$ of subsets
of $[n]={1,2,\ldots,n}$, called {\it independent sets}, such that three
properties are satisfied: (i) $\emptyset\in\mathcal{F}$, (ii) if
$X\in\mathcal{F}$ and $Y\subseteq X$ then $Y\in \mathcal{F}$, (iii) if
$U,V\in\mathcal{F}$ and $\lvert U \rvert=\lvert V \rvert+1$ there exists
$x\in U\setminus V$ such that $V\cup x\in\mathcal{F}$. The last one is also
called the \emph{exchange property}. The maximal independent sets are the
\emph{bases} of $M$. A matroid can also be defined by specifying its \emph{non-bases},
i.e. the subsets of $E$ with cardinality $k$ that are not bases. For more
details on matroids see the survey of Oxley
~\cite{Oxley2003} and the books of White(\cite{White1986},~\cite{White1987},~\cite{White1992}). 
An important class of matroids are the graphic or cycle matroids proven to be
regular, that is, they are representable over every field. 
A \emph{graphic matroid} is associated with a simple undirected graph
$G$ by letting $E$ be the set of edges of $G$ and taking as the bases the edges of
the spanning forests. 
Matroid polytopes were first studied in connection with optimization and
linear programming, introduced by Jack Edmonds~\cite{Edmonds03}. 
A nice polytopal characterization for a matroid polytope was given by Gelfand
et~al.~\cite{GGMS1987} stating that each of its edges is a parallel translate of
$e_i-e_j$ for some $i$ and $j$.  

In the case of tropical matroid polytopes the coarse
types display the number $b_{I,J}$ of bases $B$ of the associated matroid with subsets $I,J$,
where all elements of $I$ but none of $J$ are contained in $B$.  
\filbreak
\begin{theorem}
  Let $\mathcal{C}$ be the tropical complex of a tropical matroid polytope with
  $d+1$ elements and rank $k$. The set of all coarse types of the maximal cells arising in $\mathcal{C}$ is
given by the tuples $(t_1,\ldots,t_{d+1})$ with
  \begin{equation*} t_j \ = \
    \begin{cases}
      b_{\{i_1\},\emptyset}+b_{\,\emptyset,\{i_1,i_2,\ldots,i_{d'+1}\}}  & \text{if $j=i_1$} \, ,\\
      b_{\{i_l\},\{i_1,\ldots,i_{l-1}\}} & \text{if $j=i_l\in \{i_2,\ldots i_{d'+1}\}$} \, ,\\
      0 & \text{otherwise} \, .
    \end{cases}
  \end{equation*} 
  where $d'\in [d-k+1]$ and $\{i_1,i_2,\ldots,i_{d'+1}\}$ is a sequence of elements such that
$[d+1]\setminus\{i_1,i_2,\ldots,i_{d'}\}$ contains a basis of the associated
matroid.
\end{theorem}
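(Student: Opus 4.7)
The strategy is to parameterize the interiors of the maximal cells of $\mathcal{C}$ by combinatorial data read off from a representative point $x$, and then to compute $t_j = |T_j(x)|$ directly from the structure of the basis vectors.

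The starting observation is that since the generators $v_B = e_B$ are $0/1$-vectors, for any $x \in \TA^d$ and any basis $B$ one has
\[
  \min_{l \in [d+1]}\bigl(x_l - [l \in B]\bigr) \;=\; \min\bigl(x_{p(B)} - 1,\; x_{q(B)}\bigr),
\]
where $p(B) = \arg\min_{l \in B} x_l$ and $q(B) = \arg\min_{l \notin B} x_l$. Consequently the type $T(x)$ is determined entirely by the sorted order of the coordinates of $x$ together with the incidence of each basis with this order, and on the interior of a maximal cell each basis is assigned to a unique coordinate, so that $\sum_j t_j$ equals the total number of bases.

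Next I would show that an interior point $x$ of a maximal cell is described, up to the $\RR\vones$-translation, by the permutation $i_1,\ldots,i_{d+1}$ sorting its coordinates ($x_{i_1} < x_{i_2} < \cdots < x_{i_{d+1}}$) together with the unique index $d'$ at which a ``unit jump'' occurs: $x_{i_{d'+1}} - x_{i_1} < 1 < x_{i_{d'+2}} - x_{i_1}$. The range $d' \in [d-k+1]$ and the requirement that $[d+1] \setminus \{i_1, \ldots, i_{d'}\}$ contain a basis are precisely the conditions for such a candidate cell to be non-empty and full-dimensional. Given this parameterization, computing $t_j$ reduces to a three-way case analysis on each basis $B$: if $i_1 \in B$, then $x_{i_1} - 1$ realizes the minimum and $B$ contributes to $T_{i_1}$; if $i_1 \notin B$ and the first index $i_l$ lying in $B$ (in sorted order) satisfies $l \leq d'+1$, then the gap condition forces $x_{i_l} - 1 < x_{i_1}$, so $B$ contributes to $T_{i_l}$; and if $B$ is disjoint from $\{i_1,\ldots,i_{d'+1}\}$, then $x_{i_1}$ is the minimum and $B$ contributes back to $T_{i_1}$. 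Tallying these contributions by matroid containment conditions produces exactly the three cases of the formula.

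The main obstacle is the converse direction in the parameterization step, namely showing that every maximal cell of $\mathcal{C}$ arises from some tuple $(d', i_1, \ldots, i_{d'+1})$ satisfying the stated conditions, and that the parameterization is essentially bijective on coarse types. This will require a careful use of the Develin--Sturmfels covector-tree description of full-dimensional cells in $\mathcal{C}_V$, combined with the matroid exchange axiom to show that $[d+1] \setminus \{i_1, \ldots, i_{d'}\}$ containing a basis is both necessary and sufficient for the candidate type to be realized by a non-empty full-dimensional cell.
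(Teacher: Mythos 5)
Your proposal takes a genuinely different route from the paper. The paper's proof builds on the preceding structural lemmas: it first classifies pseudovertices and the maximal bounded cells as tropical hulls of ``valid sequences,'' and then splits the maximal unbounded cells into three sub-types according to the dimension of the bounded face they contain (a full valid sequence; a proper subsequence; a subsequence ending at a non-basis), reading off the coarse type entries from the fine-type formulas established in those lemmas. You instead bypass the bounded subcomplex entirely and work with the coordinates of a generic interior point $x$ of a full-dimensional cell: you sort the coordinates, locate the single ``unit jump'' $d'$, and tally the contribution of each basis $B$ to the type directly. This collapses the paper's three-case analysis into one unified computation and, for the coarse types at least, is cleaner and shorter; what the paper's route buys in exchange is an explicit description of the fine types and the face structure of the bounded subcomplex, which your coarse-type tally does not produce. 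Your case analysis in the second paragraph is correct and does reproduce the stated formula, and the underlying observation that justifies the single-jump parameterization --- only the distance from the extremal coordinate $x_{i_1}$ to the extremal coordinate inside $B$ matters, so the gap structure is a single integer rather than a full set of thresholds --- is the right insight, though you leave it implicit.

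I would push back on your assessment of the ``main obstacle.'' The converse direction is easier than you suggest and does not require the covector-tree machinery. Every generic point has a well-defined sorted order and jump $d'$, so the parameterization is automatically exhaustive on maximal cells; the dimension formula already quoted in the paper (a cell is full-dimensional iff the sets $T_j$ are pairwise disjoint, which holds at generic $x$ because each basis attains its minimum at a unique coordinate) settles maximality. The constraint that $[d+1]\setminus\{i_1,\ldots,i_{d'}\}$ contain a basis is equivalent to $b_{\emptyset,\{i_1,\ldots,i_{d'}\}}>0$; if it fails, then $b_{\{i_{d'+1}\},\{i_1,\ldots,i_{d'}\}}\le b_{\emptyset,\{i_1,\ldots,i_{d'}\}}=0$ forces $t_{i_{d'+1}}=0$ and the resulting coarse type coincides with the one obtained from $d'-1$, so the condition is exactly what makes the parameterization non-redundant (and simultaneously caps $d'$ at $d-k+1$, since a smaller complement cannot contain a $k$-element basis). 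The matroid exchange axiom you invoke is not actually needed at this point. Spelling out these two short observations would turn your plan into a complete proof.
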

Subsequently, we relate our combinatorial result to commutative algebra. For the
coarse type $\mathbf{t}(p)$ of $p$  and
$x^{\mathbf{t}(p)}={x_1}^{{\mathbf{t}(p)}_1}{x_2}^{{\mathbf{t}(p)}_2}\cdots{x_{d+1}}^{{\mathbf{t}(p)}_{d+1}}$ 
the monomial ideal \[I=\langle x^{\mathbf{t}(p)}\colon
p\in\TA^d\rangle\subset\KK[x_1,\ldots,x_{d+1}]\]
is called the \emph{coarse type ideal}. 
In~\cite{DJS09}, Corollary 3.5, it was shown that $I$ is generated by the
monomials, which are assigned to the coarse types of the inclusion-maximal cells of
the tropical complex. As a direct consequence of Theorem 3.6 in~\cite{DJS09}, we obtain the generators of $I$.
\begin{corollary}The coarse type ideal $I$ for the tropical complex of a
tropical matroid polytope with $d+1$ elements and rank $k$
is equal to \[\langle x_{i_1}^{t_{i_1}}x_{i_2}^{t_{i_2}}\cdots x_{i_{d'+1}}^{t_{i_{d'+1}}}\colon
[d+1]\setminus\{i_1,\ldots,i_{d'}\} \text{ contains a basis }\rangle\] where $(t_{i_1},t_{i_2},\ldots,t_{i_{d'+1}})=\big(b_{\{i_1\},\emptyset}+b_{\,\emptyset,\{i_1,i_2,\ldots,i_{d'+1}\}},b_{\{i_2\},\{i_1\}},\ldots,b_{\{i_{d'+1}\},\{i_1,\ldots,i_{d'}\}}\big)$.
\end{corollary}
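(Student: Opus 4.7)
The plan is to deduce the corollary as a direct translation of the previous Theorem into monomial language, using the structural result from \cite{DJS09} that is already cited in the paragraph preceding the statement. Concretely, by Corollary~3.5 of \cite{DJS09} (equivalently, Theorem~3.6 there), the coarse type ideal $I$ is generated precisely by those monomials $x^{\mathbf{t}(p)}$ that correspond to coarse types of inclusion-maximal cells of the tropical complex $\mathcal{C}$; no other generators are needed. So the entire task reduces to substituting the explicit description of these coarse types, supplied by the main Theorem, into the monomial notation.

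First I would recall the definition of $x^{\mathbf{t}(p)}=x_1^{t_1}\cdots x_{d+1}^{t_{d+1}}$ and note that a coordinate $t_j=0$ contributes the factor $x_j^{0}=1$, hence can be suppressed in the monomial. Next I would take a generic tuple $(t_1,\ldots,t_{d+1})$ described by the main Theorem, parametrised by $d'\in[d-k+1]$ and a sequence $(i_1,\ldots,i_{d'+1})$ such that $[d+1]\setminus\{i_1,\ldots,i_{d'}\}$ contains a basis of the underlying matroid. Since the theorem asserts $t_j=0$ for $j\notin\{i_1,\ldots,i_{d'+1}\}$, the corresponding monomial collapses to
\[
x_{i_1}^{t_{i_1}}x_{i_2}^{t_{i_2}}\cdots x_{i_{d'+1}}^{t_{i_{d'+1}}},
\]
and the exponents $t_{i_l}$ are exactly those prescribed in the corollary, namely $t_{i_1}=b_{\{i_1\},\emptyset}+b_{\emptyset,\{i_1,\ldots,i_{d'+1}\}}$ and $t_{i_l}=b_{\{i_l\},\{i_1,\ldots,i_{l-1}\}}$ for $l\geq 2$.

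Finally I would observe that the admissible index sets in the corollary and in the main Theorem coincide verbatim: in both, $(i_1,\ldots,i_{d'+1})$ ranges over sequences such that $[d+1]\setminus\{i_1,\ldots,i_{d'}\}$ contains a basis. Combining this bijection of parameter sets with the displayed monomial rewriting yields the claimed generating set of~$I$. There is no real obstacle here—the only thing that needs care is the bookkeeping that turns a zero coordinate into a suppressed variable and the verification that the ``corner'' exponent $t_{i_1}$ absorbs the two contributions $b_{\{i_1\},\emptyset}$ and $b_{\emptyset,\{i_1,\ldots,i_{d'+1}\}}$ from the first case of the theorem. Once that is confirmed, the corollary follows immediately.
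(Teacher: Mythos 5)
Your proposal is correct and takes exactly the route the paper intends: the paper offers no separate proof, simply labeling the corollary a ``direct consequence'' of its Theorem~3.6 together with Corollary~3.5/Theorem~3.6 of \cite{DJS09}, which is precisely the combination you invoke. Your spelled-out bookkeeping (suppressing the zero exponents and matching the parametrization of sequences) fills in what the paper leaves implicit, and nothing is missing.
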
  

Furthermore, we apply these results to the special case of uniform
matroids, introduced and studied in~\cite{Joswig05}. We close this work by
stating the minimal tropical halfspaces containing a uniform tropical matroid polytope
by using the characterization of Proposition 1 in~\cite{GaubertKatz09}.

\section{Basics of tropical convexity}
We start with collecting basic facts about tropical convexity
and fixing the notation. Defining \emph{tropical addition} by $x\oplus y:=\min(x,y)$ and
\emph{tropical multiplication} by $x\odot y:=x+y$ yields the
\emph{tropical semi-ring} $(\RR,\oplus,\odot)$.  Component-wise
tropical addition and \emph{tropical scalar multiplication}
\begin{equation*}
  \lambda \odot (\xi_0,\dots,\xi_d)  :=  (\lambda \odot
  \xi_1,\dots,\lambda \odot \xi_d)  = (\lambda+\xi_0,\dots,\lambda+\xi_d)
\end{equation*}
equips $\RR^{d+1}$ with a semi-module structure.  For
$x,y\in\RR^{d+1}$ the set
\begin{equation*} [x,y]_\trop := \SetOf{(\lambda \odot x) \oplus (\mu
    \odot y)}{\lambda,\mu \in \RR}
\end{equation*}
defines the \emph{tropical line segment} between $x$ and $y$.  A subset of
$\RR^{d+1}$ is \emph{tropically convex} if it contains the tropical
line segment between any two of its points.  A direct computation
shows that if $S\subset\RR^{d+1}$ is tropically convex then $S$ is
closed under tropical scalar multiplication.  This leads to the
definition of the \emph{tropical torus} as the quotient
semi-module
\begin{equation*}
  \TA^d  :=  \RR^{d+1} / (\RR\odot(1,\dots,1))   .
\end{equation*}

Note that $\TA^d$ was called ``tropical projective space'' in
\cite{DS04}, \cite{Joswig05}, \cite{DevelinYu06}, and
\cite{JoswigSturmfelsYu07}.  Tropical convexity gives rise to the hull
operator $\tconv$.  A \emph{tropical polytope} is the tropical convex
hull of finitely many points in $\TA^d$.

Like an ordinary polytope each tropical polytope $P$ has a unique set
of generators which is minimal with respect to inclusion; these are
the \emph{tropical vertices} of $P$.

There are several natural ways to choose a representative coordinate
vector for a point in $\TA^d$.  For instance, in the coset
$x+(\RR\odot(1,\dots,1))$ there is a unique vector $c(x)\in\RR^{d+1}$
with non-negative coordinates such that at least one of them is zero;
we refer to $c(x)$ as the \emph{canonical coordinates} of $x\in\TA^d$.
Moreover, in the same coset there is also a unique vector
$(\xi_0,\dots,\xi_d)$ such that $\xi_0=0$.  Hence, the map
\begin{equation*}\label{eq:c_0}
  c_0   :   \TA^d \to \RR^d   ,  (\xi_1,\dots,\xi_{d+1}) \mapsto (\xi_2-\xi_1,\dots,\xi_{d+1}-\xi_1)
\end{equation*}
is a bijection.  Often we will identify $\TA^d$ with $\RR^d$ via this
map.

The \emph{tropical hyperplane} $\cH_a$ defined by the \emph{tropical
  linear form} $a=(\alpha_1,\dots,\alpha_{d+1})\in\RR^{d+1}$ is the set of
points $(\xi_1,\dots,\xi_{d+1})\in\TA^d$ such that the minimum
\begin{equation*}
  (\alpha_1 \odot \xi_1) \oplus \dots \oplus (\alpha_{d+1} \odot \xi_{d+1})
\end{equation*}
is attained at least twice.  For $d=3$ the tropical hyperplane is shown in
Figure~\ref{fig:3hypersimplicesb}. The complement of a tropical hyperplane
in $\TA^d$ has exactly $d+1$ connected components, each of which is an
\emph{open sector}. A \emph{closed sector} is the topological closure
of an open sector.  The set
\begin{equation*}
  S_k  :=  \SetOfbig{(\xi_1,\dots,\xi_{d+1})\in\TA^d}{\xi_k=0 \text{ and }
    \xi_i>0 \text{ for } i\ne k}   ,
\end{equation*}
for $1\le k \le d+1$, is the \emph{$k$-th open sector} of the tropical
hyperplane $\cZ$ in $\TA^d$ defined by the zero tropical linear form.
Its closure is
\begin{equation*}
  \bar S_k  :=  \SetOfbig{(\xi_1,\dots,\xi_{d+1})\in\TA^d}{\xi_k=0 \text{ and
    } \xi_i\ge 0 \text{ for } i\ne k}   .
\end{equation*}
We also use the notation $\bar S_I:=\bigcup\SetOf{\bar S_i}{i\in I}$
for any set $I\subset[d+1]:=\{1,\dots,d+1\}$.

If $a=(\alpha_1,\dots,\alpha_{d+1})$ is an arbitrary tropical linear form
then the translates $-a+S_k$ for $1\le k\le d+1$ are the open sectors of
the tropical hyperplane $\cH_a$.  The point $-a$ is the unique point
contained in all closed sectors of $\cH_a$, and it is called the
\emph{apex} of $\cH_a$.  For each $I\subset[d+1]$ with $1\le
\# I \le d$ the set $-a+\bar S_I$ is the \emph{closed tropical
  halfspace} of $\cH_a$ of type $I$. A tropical halfspace $H(-a,I)$ can
also be written in the form
\begin{eqnarray*}
  H(-a,I)&=&\{x\in\TA^d\mid\text{ the minimum of }\displaystyle\bigoplus_{i=1}^{d+1} \alpha_i\odot
  \xi_i\text{ is attained}\\
  &&\,\,\text{ at a coordinate }i\in I\}\\&=&\{x\in\TA^d\mid \displaystyle\bigoplus_{i\in
    I} (\alpha_i\odot \xi_i)\leq\bigoplus_{j\in
    J} (\alpha_j\odot \xi_j)\}\end{eqnarray*}where $I$ and $J$ are disjoint subsets of
$[d+1]$ and $I\cup J= [d+1]$. The tropical polytopes in
$\TA^d$ are exactly the bounded intersections of finitely many closed
tropical halfspaces; see \cite{GaubertKatz09} and \cite{Joswig05}.

We concentrate on the combinatorial structure of tropical polytopes. Let
$V:=(v_1,\dots,v_n)$ be a sequence of points in $\TA^d$. 
The \emph{(fine) type} of $x\in\TA^d$ with respect to $V$ is the ordered
$(d+1)$-tuple $\type_V(x):=(T_1,\dots,T_{d+1})$ where
\begin{equation*}
  T_k  :=  \SetOf{i\in\{1,\dots,n\}}{v_i\in x+\bar S_k}   .
\end{equation*}
For a given type $\cT$ with respect to $V$ the set
\begin{equation*}
  X^{\circ}_V(\cT)  :=  \SetOfbig{x\in\TA^d}{\type_V(x)=\cT}
\end{equation*}
is a relatively open subset of $\TA^d$ and is called the \emph{cell} of
type $\cT$ with respect to $V$. The set $X^{\circ}_V(\cT)$ as well as its
topological closure are tropically and ordinary convex; in \cite{JK08}, these
were called \emph{polytropes}. With respect
to inclusion the types with respect to $V$ form a partially ordered
set. The intersection of two cells $X_V(\cS)$ and $X_V(\cT)$ with
type $\cS$ and $\cT$ is equal to the polyhedron $X_V(\cS\cup\cT)$. 
The collection of all (closed) cells induces a polyhedral subdivision
$\mathcal{C}_V$ of $\TA^d$. A $\min$-tropical polytope $P=\tconv(V)$ is the
union of cells in the bounded subcomplex $\mathcal{B}_V$ of $\mathcal{C}_V$
induced by the arrangement $\mathcal{A}_V$ of $\max$-tropical hyperplanes with apices $v\in
V$. A cell of $\mathcal{C}_V$ is unbounded if and only if one of its type components is 
the empty set. The type of $x$ equals the union of the types of the (maximal) 
cells that contain $x$ in their closure. The dimension of a cell $X_T$ can be
calculated as the
number of the connected components of the undirected graph 
$G=\big(\{1,2,\ldots,d+1\},\,\{(j,k)\mid T_j\cap T_k\neq\emptyset\}\big)$
minus one. The zero-dimensional cells 
are called pseudovertices of $P$.

Replacing the (fine) type entries $T_k\subseteq [n]$ for $k\in [d+1]$ of a point $p\in\TA^d$ by
their cardinalities $t_k:=\left|T_k\right|$ we get the \emph{coarse type} 	
$t_V(p)=(t_1,\ldots,t_{d+1})\in\NN^{d+1}$ of $p$. 
A coarse type entry $t_k$ displays how many generating points lie in the $k$-th closed sector
$p+\overline{S_k}$.  
In~\cite{DJS09}, the authors associate the tropical complex of a tropical
polytope with a monomial ideal, the coarse type ideal \[I:=\langle
{x_1}^{t_1}{x_2}^{t_2}\cdots{x_{d+1}}^{t_{d+1}}\colon
p\in\TA^d\rangle\subset\KK[x_1,\ldots,x_{d+1}].\] By Corollary 3.5 of~\cite{DJS09}, $I$ is generated by the
monomials assigned to the coarse types of the inclusion-maximal cells of
the tropical complex. The tropical complex
$\mathcal{C}_V$ gives rise to minimal cellular resolutions of $I$.
\begin{theorem}[ \cite{DJS09}, Theorem 3.6 ]\label{thm:DJS2009} The labeled complex
$\mathcal{C}_V$ supports a
minimal cellular resolution of the ideal $I$ generated by monomials
corresponding to the set of all (coarse) types.
\end{theorem}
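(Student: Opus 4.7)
The plan is to apply the Bayer--Sturmfels--Miller cellular resolution criterion: a CW complex $\Delta$ whose faces $F$ carry monomials $m_F$ compatible with the face order supports a free resolution of the ideal $\langle m_F\rangle$ if and only if, for every monomial $\mu$, the subcomplex $\Delta_{\le\mu}:=\{F:m_F\text{ divides }\mu\}$ is $\KK$-acyclic; the resolution is minimal exactly when $m_F\ne m_G$ for every strict face relation $F\subsetneq G$. I would apply this to $\mathcal{C}_V$ labeled by $X\mapsto x^{\mathbf{t}(X)}$, reading its face order \emph{in reverse} so that the inclusion-maximal cells of $\mathcal{C}_V$ play the role of vertices carrying the generators of $I$.

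The first step is to verify that the labeling is monotone in this reversed order. Since the fine type of a point $p$ is the union of the fine types of the maximal cells whose closure contains $p$, the components $T_k$ enlarge when passing from a maximal cell $G$ to one of its faces $F$; hence $\mathbf{t}(G)\le\mathbf{t}(F)$ coordinatewise and $x^{\mathbf{t}(G)}$ divides $x^{\mathbf{t}(F)}$. In particular, the inclusion-maximal cells carry the divisibility-minimal labels, which recovers Corollary~3.5 of~\cite{DJS09}: $I$ is generated by their coarse-type monomials.

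The heart of the argument is acyclicity of $\mathcal{C}_{V,\le\mu}$ for every monomial $\mu=x_1^{c_1}\cdots x_{d+1}^{c_{d+1}}$. The underlying space is the sublevel set $L_\mu:=\{p\in\TA^d : t_k(p)\le c_k\text{ for all }k\}$, and my plan is to show $L_\mu$ is contractible, whence $\KK$-acyclic (via the carrier lemma applied to the induced cellular subdivision). I would exploit the tropical structure: decreasing the $k$-th canonical coordinate of $p$ can only push generators out of the closed sector $p+\bar S_k$, so by moving along a carefully chosen sequence of tropical rays---ordered according to the covector poset of the tropical hyperplane arrangement $\mathcal{A}_V$---one can flow $L_\mu$ onto a single cell while remaining inside $L_\mu$ throughout. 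An equivalent, more combinatorial formulation is a discrete-Morse matching on $\mathcal{C}_{V,\le\mu}$ that pairs each non-critical cell with the unique coface obtained by relaxing one tight coarse-type constraint, leaving exactly one critical cell.

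Minimality is then immediate: a strict face inclusion $F\subsetneq G$ forces a generator to enter some closed sector, which strictly increases the corresponding coarse-type coordinate, so $x^{\mathbf{t}(F)}\ne x^{\mathbf{t}(G)}$. I expect the acyclicity step to be the main obstacle. The difficulty is that a single straight-line translation inside $\TA^d$ can lower one $t_k$ while raising another $t_j$, so the retraction of $L_\mu$ cannot be performed by a naive deformation; the flow must instead be orchestrated cell-by-cell through the covector combinatorics so that the resulting piecewise-linear retraction respects the subdivision $\mathcal{C}_V$. Controlling this bookkeeping is precisely the content of Theorem~3.6 of~\cite{DJS09} and occupies the bulk of its proof.
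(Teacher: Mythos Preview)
The paper does not prove this statement at all: Theorem~\ref{thm:DJS2009} is imported verbatim from \cite{DJS09} and used as a black box, so there is no proof in the present paper to compare your proposal against.

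Your outline is nonetheless a reasonable sketch of how such a result is established, and you correctly identify the two essential points: the coarse-type labeling is monotone for the \emph{reversed} face order of $\mathcal{C}_V$ (so that inclusion-maximal cells carry the generators of $I$), and minimality follows because a proper face relation strictly enlarges some $T_k$. One step that would need more care in an actual proof is your identification of the relevant subcomplex with the sublevel set $L_\mu$: the latter is the union of the \emph{open} cells $X_T^\circ$ with coarse type $\le\mu$, which is not literally a subcomplex of $\mathcal{C}_V$, and matching its homotopy type to that of the order filter in the reversed face poset requires either passing to the dual regular subdivision of $\Delta_{n-1}\times\Delta_d$ (where the face order is the usual one and Bayer--Sturmfels applies directly) or an explicit nerve/carrier argument. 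You already concede that the acyclicity step ``is precisely the content of Theorem~3.6 of~\cite{DJS09}'', so your proposal is best read as a roadmap to that proof rather than an independent one.
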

Considering cellular resolutions of monomial ideals, introduced in~\cite{BPS1998} and~\cite{BS1998}, is a natural
technique to construct resolutions of monomial ideals using labeled cellular
complexes and provide an important interface
between topological constructions, combinatorics and algebraic ideas. The authors
of~\cite{BlockYu} and \cite{DJS09} use this to give an algorithm for determining the facial structure of a
tropical complex. More precisely, they associate a squarefree monomial ideal $I$ with a tropical
polytope and calculate a minimal
cellular resolution of $I$, where the $i$-th syzygies of $I$ are encoded by the
$i$-dimensional faces of a polyhedral complex.

A tropical halfspace is called \emph{minimal} for a tropical polytope $P$ if 
it is minimal with respect to inclusion among all tropical halfspaces containing
$P$. Consider a tropical halfspace $H(a,I)\subset \TA^d$ with $I\subset[d+1]$ and apex 
$a\in \TA^d$, and a tropical polytope $P=\tconv\{v_1,\ldots,v_n\}\subseteq \TA^d$.
To show that $H(a,I)$ is minimal for $P$, it suffices to prove, by Proposition 1
of~\cite{GaubertKatz09}, that the following three
  criteria hold for the type $(T_1,T_2,\ldots,T_{d+1})=\type_V(a)$ of the apex $a$: 
\begin{itemize}\item[(i)] $\displaystyle\bigcup_{i\in I}T_i=[n]$, \item[(ii)] for each
  $j\in I^C$ there exists an $i\in I$ such that $T_i\cap
  T_j\neq\emptyset$, \item[(iii)] for each $i\in I$ there exists $j\in I^C$ such that
  $\displaystyle T_i\cap T_j\not\subset\bigcup_{k\in I\setminus\{i\}}T_k$.\end{itemize}
  Here, we denote the complement of a set $I\subseteq [d+1]$ as
  $I^C=[d+1]\setminus I$.

\noindent
Obvious minimal tropical halfspaces of a tropical polytope
$P=\tconv(V)\subseteq \TA^{d}$ are its cornered halfspaces, see~\cite{Joswig08}.  
The {\it $k$-th corner} of $P$ is defined as 
\[c_k(V):= (-v_{1,k}) \odot v_1\oplus(-v_{2,k})\odot v_2
\oplus\cdots\oplus(-v_{n,k})\odot v_n.\] 
The tropical halfspace $H_k:=c_k(V)+\overline{S_k}$ is called the 
{\it $k$-th cornered tropical halfspace} of $P$ and the intersection 
of all $d+1$ cornered halfspaces is the {\it cornered hull} of $P$.

\section{Tropical Matroid Polytopes} 
The tropical matroid polytope of a matroid $\mathcal{M}$ 
is defined in~\cite{DSS2005} as the tropical convex hull of the negative incidence vectors 
of the bases of $\mathcal{M}$. In this paper, we restrict ourselves to matroids arising
from graphs. 

The \emph{graphic matroid} of a simple undirected graph $G=(V,E)$ is
$\mathcal{M}(G)=(E,\mathcal{I}=\{F\subseteq E\colon F \text{ is acyclic}\})$. 
While the forests of $G$ form the system of independent sets of
$\mathcal{M}(G)$ its bases are the spanning forests. We will assume that $G$ is
connected, so the bases of $\mathcal{M}(G)$ are the spanning trees of
$G$. Furthermore, we exclude bridges, i.e. edges whose deletion increases the
number of connected components of $G$, leading to elements that are contained 
in every basis. 
Let $d+1$ be the number of elements and $n$ be the number of bases of 
$\mathcal{M}:=\mathcal{M}(G)$ and $\mathcal{B}:=\{B_1,\ldots,B_n\}$ its bases. 
It follows from the exchange property of matroids that all bases of 
$\mathcal{M}$ have the same number of elements, which is called the
\emph{rank} of $\mathcal{M}$. Consider the $0/1$-matrix 
$M\in\RR^{(d+1)\times n}$ with rows indexed by the elements of the 
ground set $E$ and columns indexed by the bases of $\mathcal{M}$ 
which has a $0$ in entry $(i,j)$ if the $i$-th element is in the $j$-th
basis. The \emph{tropical matroid polytope} $P$ of $\mathcal{M}$ is the
tropical convex hull of the columns of $M$. Let
\begin{equation}\label{eq:generators}V=\left\{-e_B:=\sum_{i\in
      B}-e_i\big|B\in \mathcal{B}\right\}\end{equation} be the set of generators of
$P$. It turns out that these are just the tropical vertices of $P$, see Lemma~\ref{lem:pseudovertices}.
If the underlying matroid has rank $k$, then the canonical coordinate 
vectors of $V$ have exactly $k$ zeros and $d+1-k$ ones and
will be denoted as $v_{B_i}$ or for short $v_i$ if the corresponding basis is
$B_i\in\mathcal{B}$. Note that with $\oplus$ as $\max$ instead of $\min$ the generators 
of a tropical matroid polytope are the positive incidence vectors of
the bases of the corresponding matroid. Throughout this paper we write $\mathcal{P}_{k,d}$ for the set of all tropical matroid polytopes arising from a
graphic matroid with $d+1$ elements and rank $k$.  

\begin{example}
  The tropical hypersimplex
  $\Delta_k^d$ in $\TA^d$ studied in \cite{Joswig05} is a 
  tropical matroid polytope of a uniform matroid of rank $k$ with
  $d+1$ elements and $\binom{d+1}{k}$ bases. It is defined as the 
  tropical convex hull of all points $-e_I:=\displaystyle\sum_{i\in I}-e_i$ 
  where $e_i$ is the $i$-th unit vector of $\RR^{d+1}$ and $I$ is a 
  $k$-element subset of $[d+1]$. The tropical vertices of $\Delta_k^d$ are
  \[\V(\Delta_k^d)=\left\{-e_I\big|I\in\binom{[d+1]}{k}\right\}\, \text{ for all }k>0.\] 
  In~\cite{Joswig05}, it is shown that
  $\Delta_{k+1}^d\subsetneq\Delta_k^d$ implying that the first tropical
  hypersimplex contains all other tropical hypersimplices in $\TA^d$. 
  The first tropical hypersimplex $\Delta^d=\Delta_1^d$ in $\TA^d$ is the 
  $d$-dimensional tropical standard simplex which is also a polytrope. 
  Clearly, we have for a tropical matroid polytope $P\in\mathcal{P}_{k,d}$ the
  chain $P\subseteq\Delta_k^d\subsetneq\cdots\subsetneq\Delta_1^d=\Delta^d$.
  For $d=3$ the three tropical hypersimplices are shown in Figure~\ref{fig:3hypersimplices}.
  \begin{figure}[htb]
    \centering
    \subfigure[\label{fig:3hypersimplicesa}$k=3$]{\includegraphics[scale=0.25]{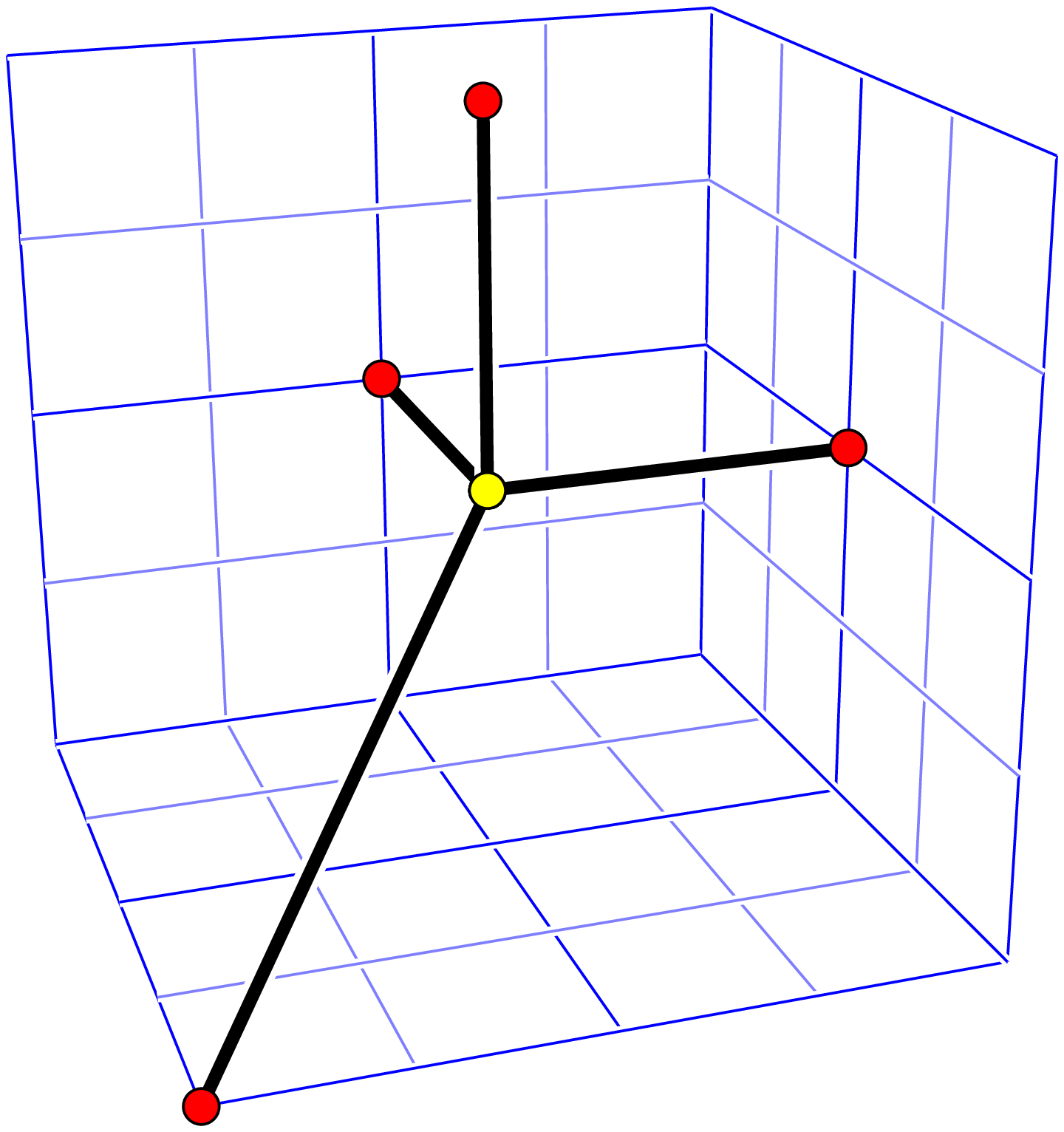}}\hfill
    \subfigure[\label{fig:3hypersimplicesb}$k=2$]{\includegraphics[scale=0.25]{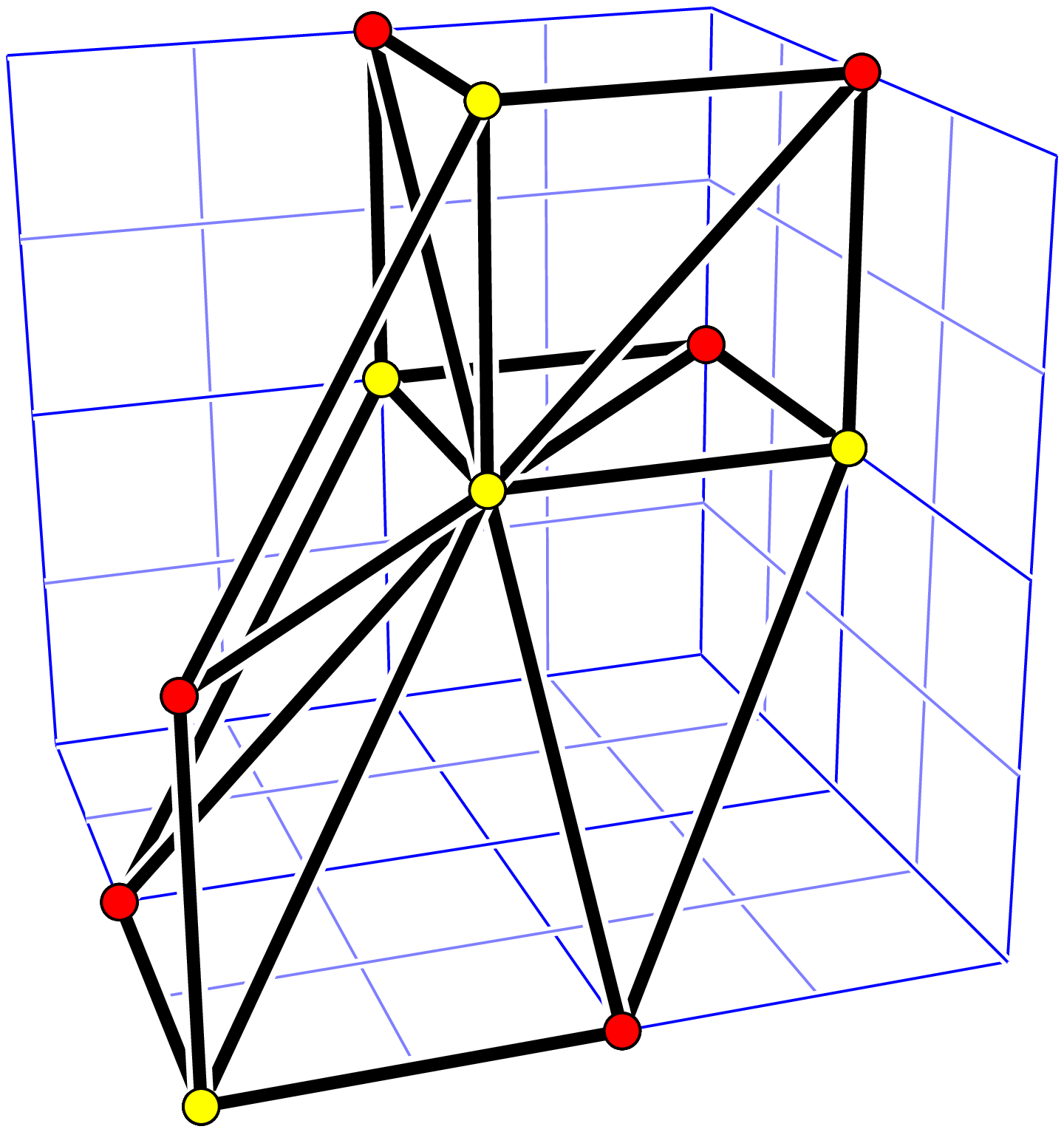}}\hfill
    \subfigure[\label{fig:3hypersimplicesc}$k=1$]{\includegraphics[scale=0.25]{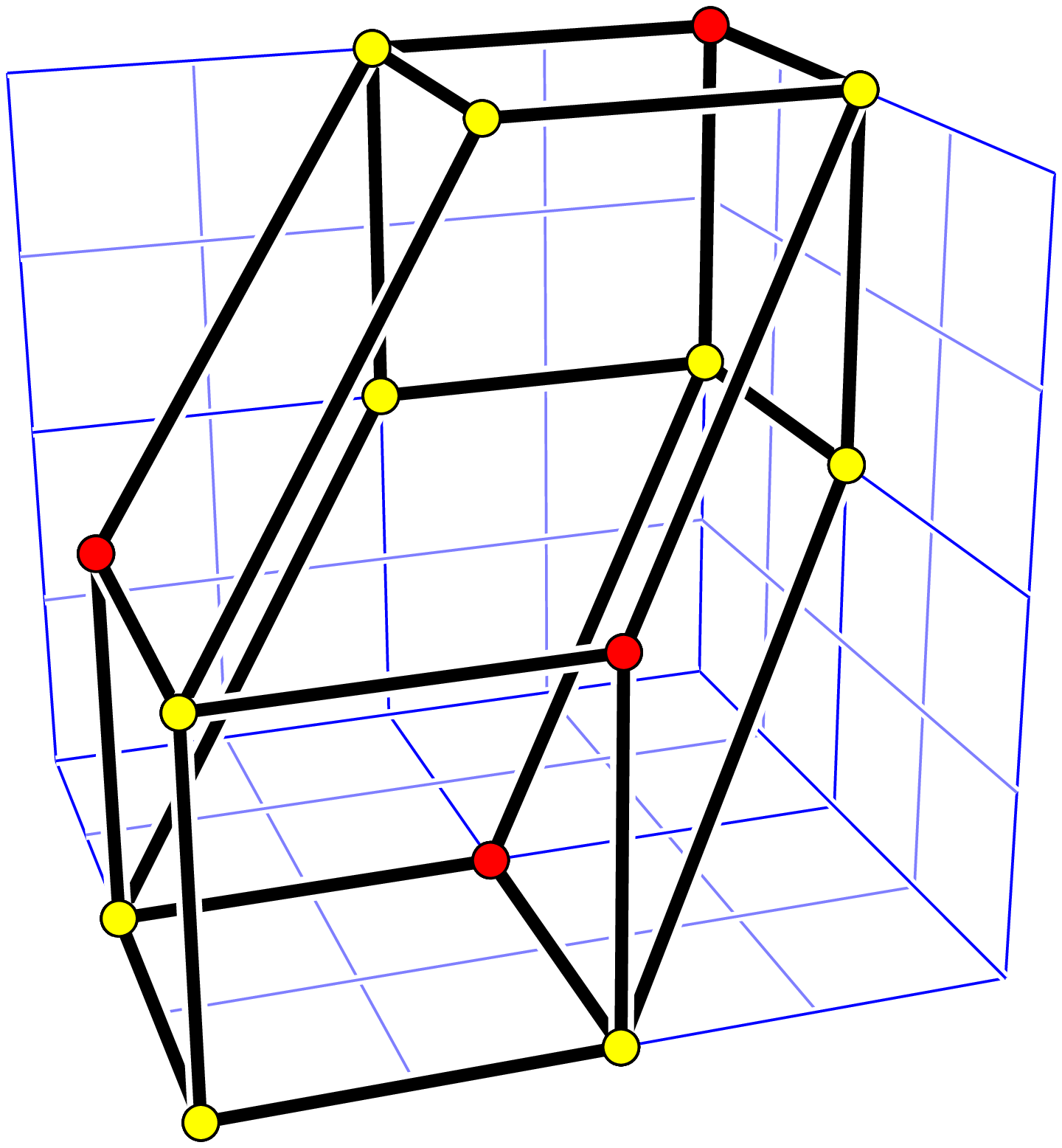}}
    \caption{\label{fig:3hypersimplices}The three $3$-dimensional tropical
hypersimplices with $\Delta_3^3\subset\Delta_2^3\subset\Delta_1^3$.}
  \end{figure}
\end{example} 

The origin $\bZero\in\TA^{d}$ and its fine type are crucial for the 
calculation of the fine and the
coarse types of the maximal cells in the cell complex of $P$.

\begin{lemma}
  A tropical matroid polytope $P\in\mathcal{P}_{k,d}$ with generators $V$ contains the origin
  $\bZero\in\TA^{d}$. Its type is
  $\type_{V}(\bZero)=(T^{(0)}_1,T^{(0)}_2,\ldots,T^{(0)}_{d+1})$ with $T^{(0)}_i=\{j\mid i\in B_j\}$.
\end{lemma}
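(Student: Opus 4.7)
The plan splits naturally into two independent tasks: showing $\bZero \in P$, and computing the type. Throughout, I will work with canonical coordinates of the generators $v_{B_j}$, which by equation \eqref{eq:generators} satisfy $(v_{B_j})_i = 0$ if $i \in B_j$ and $(v_{B_j})_i = 1$ otherwise.

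For the containment, I would form the tropical linear combination with all coefficients equal to zero, namely $w := \bigoplus_{j=1}^{n} 0 \odot v_{B_j} = \bigoplus_{j=1}^n v_{B_j}$. Taking the coordinatewise minimum, the $i$-th coordinate of $w$ equals $0$ precisely when some basis $B_j$ contains the element $i$, and equals $1$ otherwise. Since $G$ is a simple graph (hence has no loops in the matroid sense), every element of the ground set lies in at least one spanning tree, so every coordinate of $w$ is $0$, giving $w = \bZero$. In particular $\bZero \in \tconv(V) = P$. This is the only spot where the simple-graph assumption enters; I would mention it explicitly since it is the analogue of the classical fact that every non-loop lies in a basis.

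For the type computation, recall $T^{(0)}_k = \{j \mid v_{B_j} \in \bZero + \bar{S}_k\} = \{j \mid v_{B_j} \in \bar{S}_k\}$. By the definition of $\bar{S}_k$, a point $x \in \TA^d$ lies in $\bar{S}_k$ iff it admits a representative with $\xi_k = 0$ and all other $\xi_i \ge 0$; equivalently, its canonical coordinates (the unique non-negative representative with at least one zero entry) satisfy $c(x)_k = 0$. Applied to $v_{B_j}$, whose canonical coordinates are $0$ on $B_j$ and $1$ off $B_j$, this reads $(v_{B_j})_k = 0$, i.e.\ $k \in B_j$. Hence $T^{(0)}_k = \{j \mid k \in B_j\}$, which is precisely the claimed description.

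I do not expect any real obstacle; the only subtlety is that one must use canonical coordinates consistently and remember that membership in $\bar{S}_k$ is a property of the point in $\TA^d$, not of a chosen representative in $\RR^{d+1}$. The argument never uses the graphic structure beyond the fact that every element belongs to some basis, so a brief note that the statement extends to any loop-free matroid would be appropriate, but I would not pursue this generalization here.
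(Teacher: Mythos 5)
Your proof is correct, and the type computation is essentially identical to the paper's (both amount to: $j \in T^{(0)}_i$ iff the $i$-th canonical coordinate of $v_{B_j}$ is zero, i.e.\ $i \in B_j$). The only genuine difference is in the containment argument. The paper invokes Proposition 3 of Develin--Sturmfels on the shape of a tropical line segment between two $0$-$1$-vectors (the segment passes through their coordinatewise minimum) and iterates over the generators, using that every element lies in some basis. You instead form the single tropical linear combination with all coefficients zero, $\bZero = v_{B_1} \oplus \cdots \oplus v_{B_n}$, and conclude membership in $\tconv(V)$ directly. Both reduce to the same coordinatewise-minimum computation; your route is shorter and cleaner, but tacitly uses the Develin--Sturmfels fact that the tropical convex hull coincides with the set of all tropical linear combinations, while the paper's version needs only the definition of tropical convexity via segments. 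Your observation that the argument applies verbatim to any loop-free matroid (only the ``every element lies in some basis'' fact is used) is accurate and a fair remark.
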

\begin{proof}
  By Proposition 3 of \cite{DS04} about the shape of a tropical
  line segment, the only
  pseudovertex of the tropical line segment between two distinct $0$-$1$-vectors
  $u$ and $v$ in $\TA^{d}$ is the point $w$ with $w_{l}=0$ if $u_l=0$ or $v_l=0$ and
  $w_{l}=1$ otherwise. Since every element of $E$ is contained in any basis of
  $\mathcal{M}(G)$ (apply any spanning-tree-greedy-algorithm for the connected
  components of $G$ starting from this element) and by using the previous argument,
  the origin must be contained in $P$.  

  An index $j$ is contained in the $i$-th type coordinate $T^{(0)}_i$ if $v_{j,i}=\min\{v_{j,1},v_{j,2},\ldots,v_{j,{d+1}}\}$, which is
  satisfied by all indices $i\in B_j$.
\end{proof}

The $i$-th type entry $T_i^{(0)}$ 
of $\bZero$ contains all bases of $\mathcal{M}$ with element $i$, and $|T_i^{(0)}|$ is
the number of bases of $\mathcal{M}$ containing $i$.

Now it is time to introduce our running example.	

\initfloatingfigs

\begin{example}\label{ex:RunEx1}
  The graphical matroid given by the following graph $G$ has $d+1=5$ elements
  (edges with bold indices), rank $k=3$, $n=8$ bases
  $B_1=\{\mathbf{1},\mathbf{2},\mathbf{4}\},\,B_2=\{\mathbf{1},\mathbf{2},\mathbf{5}\},\,B_3=\{\mathbf{1},\mathbf{3},\mathbf{4}\},\,B_4=\{\mathbf{1},\mathbf{3},\mathbf{5}\},\,B_5=\{\mathbf{1},\mathbf{4},\mathbf{5}\},\,B_6=\{\mathbf{2},\mathbf{3},\mathbf{4}\},\,B_7=\{\mathbf{2},\mathbf{3},\mathbf{5}\},\,B_8=\{\mathbf{3},\mathbf{4},\mathbf{5}\}$
  and the non-bases $\{\mathbf{1},\mathbf{2},\mathbf{3}\},\,\{\mathbf{2},\mathbf{4},\mathbf{5}\}$.
\vspace*{0.1cm}\\
  \begin{minipage}{0.2\textwidth}
    \centering
    \psfrag{0}{$\mathbf{1}$}
    \psfrag{G}{$G:$}
    \psfrag{1}{$\mathbf{2}$}
    \psfrag{2}{$\mathbf{3}$}
    \psfrag{3}{$\mathbf{4}$}
    \psfrag{4}{$\mathbf{5}$}
    \psfrag{T0}{\!$\mathit{(12345)}$}
    \psfrag{T1}{$\mathit{(1267)}$}
    \psfrag{T2}{$\mathit{(34678)}$}
    \psfrag{T3}{$\mathit{(13568)}$}
    \psfrag{T4}{$\mathit{(24578)}$}
    \includegraphics[scale=0.9]{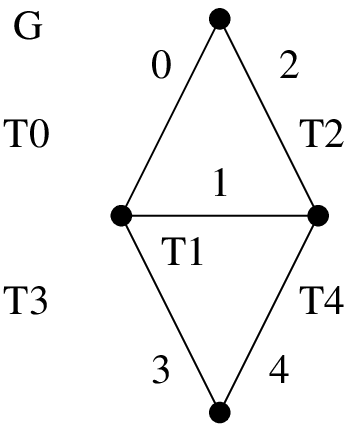}
  \end{minipage} \hfill
  \begin{minipage}{0.7\textwidth}
    Let $P$ be the corresponding tropical matroid polytope with its generators
    {\begin{eqnarray*}V&=&\{v_{B_1},\ldots,v_{B_8}\}\\ &=&\left\{\scriptsize
\begin{pmatrix}0\\0\\1\\0\\1\end{pmatrix},
\begin{pmatrix}0\\0\\1\\1\\0\end{pmatrix},
\begin{pmatrix}0\\1\\0\\0\\1\end{pmatrix},
\begin{pmatrix}0\\1\\0\\1\\0\end{pmatrix},
\begin{pmatrix}0\\1\\1\\0\\0\end{pmatrix},
\begin{pmatrix}1\\0\\0\\0\\1\end{pmatrix},
\begin{pmatrix}1\\0\\0\\1\\0\end{pmatrix},
\begin{pmatrix}1\\1\\0\\0\\0\end{pmatrix}
\right\}.\end{eqnarray*}} The type of the origin $\bZero$ of $P$
    is $(12345,1267,34678,13568,24578)$ where the $i$-th type entry contains all bases
    using the edge $i$ (italic edge attributes). 
  \end{minipage} 
\end{example}
In the next lemma we will show that the tropical standard simplex $\Delta^d$ is 
the cornered hull of all tropical matroid polytopes in $\mathcal{P}_{k,d}$. 
\begin{lemma}\label{lem:chull of tmp}
  The cornered hull of a tropical matroid polytope $P\in\mathcal{P}_{k,d}$ with
  generators $V$ is the  
  $d$-dimensional tropical standard simplex $\Delta^d$. The $i$-th corner 
  of $P$ is the vector $e_i$. The type of $e_i$ with respect to
  $V$ is $\type_{V}(e_i)=(T_1,\ldots,T_{d+1})$ with  
  \begin{equation*} T_j \ = \
    \begin{cases}
      [d+1]  & \text{if $j=i$} \, ,\\
      \{l\mid j\in B_l \text{ and }i\notin B_l \}& \text{otherwise} \, .
    \end{cases}
  \end{equation*}
\end{lemma}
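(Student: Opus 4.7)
The plan is to compute the $i$-th corner $c_i(V)$ directly from its defining formula, verify that it equals $e_i$, read off $\type_V(e_i)$ from the definition of type, and then intersect the resulting cornered halfspaces. The key input is the simple form of each generator: $v_{B_l,m}=0$ if $m\in B_l$ and $v_{B_l,m}=1$ otherwise. For the corner, the $i$-th coordinate of each term $(-v_{B_l,i})\odot v_{B_l}$ is $0$, so the $i$-th coordinate of $c_i(V)$ is $0$. For $m\neq i$ the $m$-th coordinate of the $l$-th term equals $v_{B_l,m}-v_{B_l,i}\in\{-1,0,1\}$, and is $-1$ exactly when $m\in B_l$ and $i\notin B_l$. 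The essential step is to show that such a basis $B_l$ exists for every pair $i\neq m$: since $G$ has no bridges, $G\setminus\{i\}$ is connected, so extending $\{m\}$ greedily to a spanning tree of $G\setminus\{i\}$ produces a basis of $\mathcal{M}$ containing $m$ but avoiding $i$. Consequently every $m$-th coordinate of $c_i(V)$ equals $-1$, which gives $c_i(V)=e_i$ in canonical coordinates.

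To determine $\type_V(e_i)$ I would check, for each generator $v_{B_l}$, which indices $m$ realize the minimum of $v_{B_l,m}-(e_i)_m=v_{B_l,m}-\delta_{mi}$. If $i\in B_l$ the minimum value $-1$ is attained only at $m=i$, placing $l$ in $T_i$ alone. If $i\notin B_l$ the minimum value $0$ is attained at $m=i$ and at every $m\in B_l$, putting $l$ into $T_i$ and into each $T_m$ with $m\in B_l$. Collecting both cases yields $T_i=\{1,\ldots,n\}$ and $T_j=\{l\mid j\in B_l,\ i\notin B_l\}$ for $j\neq i$, matching the claim.

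Finally, each cornered halfspace rewrites as $H_k=\{x\in\TA^d\mid x_k-x_m\leq 1\text{ for all }m\}$, so the intersection $\bigcap_k H_k$ consists of those $x\in\TA^d$ whose coordinates pairwise differ by at most one. This coincides with $\Delta^d$: letting $u_m$ denote the canonical representative of $-e_m$ (zero in position $m$, one elsewhere), the identity $x=\bigoplus_m x_m\odot u_m$ expresses any such $x$ as a tropical combination of the generators of $\Delta^d$, and conversely every such combination satisfies the halfspace inequalities. The main obstacle is the matroidal pair argument in the corner step; without the bridge-free hypothesis the minimum in position $m$ could remain at $0$, and the identification $c_i(V)=e_i$ would fail. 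The remaining steps reduce to direct case analyses.
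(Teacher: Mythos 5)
Your proof is correct and follows essentially the same direct computation as the paper's: evaluate $c_i(V)$ coordinate by coordinate from the $0/1$-generators, then read off $\type_V(e_i)$ by determining where each $v_{B_l,m}-\delta_{mi}$ attains its minimum. You are somewhat more explicit than the paper on two points it leaves implicit: invoking the bridge-free hypothesis to produce, for each $m\neq i$, a basis containing $m$ and avoiding $i$ (so the $m$-th minimum is genuinely $-1$), and rewriting the cornered halfspaces as $\lvert x_k-x_m\rvert\le 1$ to verify that their intersection is $\Delta^d$.
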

\begin{proof}
  For $B\in\mathcal{B}$ the $i$-th (canonical) coordinate of $v_B$ is \begin{equation*} v_{B,i} \ = \
    \begin{cases}
      0  & \text{if $i\in B$} \, ,\\
      1& \text{otherwise} \, .
    \end{cases}
  \end{equation*}
  The $j$-th coordinate of the $i$-th corner $c_i(V)$ of $P$ is 
  \begin{equation*}c_i(V)_j=\displaystyle\min_{J\in \mathcal{B}}(v_{J,j}-v_{J,i})\ = \
    \begin{cases}
      0  & \text{if $i=j$} \, ,\\
      -1& \text{otherwise} \, .
    \end{cases}
  \end{equation*}
  In canonical coordinates we get $c_i(V)=e_i$, which at the same
  time is the $i$-th apex vertex of the tropical standard simplex $\Delta^d$.
  The type of $e_i$ is $\type_{V}(e_i)=(T_1,T_2,\ldots,T_{d+1})$, where some index $l$ is contained in the $j$-th coordinate $T_j$ for $j\neq i$
  if $v_{l,j}=\min\{v_{l,1},v_{l,2},\ldots,v_{l,i}-1,\ldots,v_{l,{d+1}}\}$. This is
  satisfied by all bases $B_l\in\mathcal{B}$ with $j\in B_l$ and $i\notin B_l$. For $j=i$ all 
  indices $l\in[d+1]$ are contained in $T_i$ since the right hand side of
  $v_{l,i}-1=\min\{v_{l,1},v_{l,2},\ldots,v_{l,i}-1,\ldots,v_{l,{d+1}}\}$ is
  smaller or equal than the left hand side in every case.
\end{proof}

Besides the point $\bZero$, the other pseudovertices of a tropical matroid
polytope correspond to unions of its bases.
\begin{lemma}\label{lem:pseudovertices}
  The pseudovertices of $P\in\mathcal{P}_{k,d}$ are  \[\PV(P)=\left\{-e_J\big|J
    = \bigcup_{i\in I}B_i \text{ for some }I\subseteq[n]\right\}.\] 

  The pseudovertices of the first tropical hypersimplex are 
  \[\PV(\Delta^d)=\left\{-e_J\big|J\in\bigcup_{j=1}^{d}\binom{[d+1]}{j}\right\}.\] 

  Let $(T^{(0)}_1,\ldots,T^{(0)}_{d+1})$ be the type of the
  pseudovertex $\bZero$ with respect to $V$ and consider a point $-e_J\in\PV(P)$. 
  If the complement $J^C$ of $J$ is equal to $\{i_1,\ldots,i_r\}$, then the type 
  $(T_1,\ldots,T_{d+1})$ of $-e_J$ with respect to $V$ is given by
  \begin{equation*} \label{eq:typestmp}T_j \ = \
    \begin{cases}
      T^{(0)}_j\setminus(T^{(0)}_{i_1}\cup\cdots\cup T^{(0)}_{i_r})
      & \text{if $j\in J$} \, ,\\
      T^{(0)}_j\cup({T^{(0)}_{i_1}}^C\cap\cdots\cap {T^{(0)}_{i_r}}^C)& \text{otherwise} \, .
    \end{cases}
  \end{equation*}
\end{lemma}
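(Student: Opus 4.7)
The plan is to establish the type formula first, since it powers the other two assertions. Given $-e_J$ with $J^C = \{i_1, \ldots, i_r\}$, I would compute $v_{B_l} + e_J$ coordinate-wise: its $m$-th entry is $0$ iff $m \in B_l \setminus J$, is $2$ iff $m \in J \setminus B_l$, and is $1$ otherwise. Thus the minimum over $m$ equals $0$ exactly when $B_l \not\subseteq J$ (attained on $B_l \setminus J$), and equals $1$ when $B_l \subseteq J$ (attained on $B_l \cup J^C$). Collecting the bases contributing to each $T_k$ and splitting on whether $k \in J$, the formula follows after identifying $T^{(0)}_{i_1} \cup \cdots \cup T^{(0)}_{i_r}$ with $\{l : B_l \not\subseteq J\}$ and the complementary intersection with $\{l : B_l \subseteq J\}$.

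For the characterization of pseudovertices, I would prove both inclusions using this formula. In the direction $(\Leftarrow)$, writing $J = \bigcup_{i \in I} B_i$ gives $-e_J = \bigoplus_{i \in I}(-e_{B_i}) \in P$. Setting $S := \{l : B_l \subseteq J\}$, the type formula produces $T_j \cap T_k \supseteq S \ne \emptyset$ for any $j, k \in J^C$, and $T_j \cap T_k = T_j \ne \emptyset$ for $j \in J$, $k \in J^C$ (the latter because $j$ lies in some $B_i \in S$ by hypothesis). So whenever $J^C \ne \emptyset$ the type graph is connected through $J^C$, and the cell is $0$-dimensional. In the degenerate case $J = [d+1]$ (so $-e_J = \bZero$), connectedness reduces to the statement that any two distinct edges of $G$ lie in a common spanning tree, which holds because $G$ is simple: two distinct edges are always independent in a graphic matroid and therefore extend to a basis. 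For the direction $(\Rightarrow)$ I would invoke the standard fact that a tropical polytope generated by $0$-$1$ vectors has its pseudovertices among the tropical sums $\bigoplus_{i \in I} v_{B_i} = v_{\bigcup_i B_i}$ (via the Develin--Sturmfels bijection with the regular subdivision of $\Delta_{n-1} \times \Delta_d$), so every pseudovertex is of the form $-e_J$ for some $J \subseteq [d+1]$. The type formula then forces $T_k \ne \emptyset$ for all $k$ (an empty type component would render the cell unbounded, contradicting $0$-dimensionality), and the condition at $k \in J$ rewrites exactly as $J = \bigcup_{B_l \subseteq J} B_l$.

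For the first tropical hypersimplex the bases are the singletons $\{1\}, \ldots, \{d+1\}$, so unions of bases are all subsets of $[d+1]$. Running the same type-graph analysis, every $-e_J$ with $1 \le |J| \le d$ is a pseudovertex, while both extremes $J = \emptyset$ and $J = [d+1]$ yield $\bZero$, whose type $(\{1\}, \ldots, \{d+1\})$ has all pairwise intersections empty; the type graph is totally disconnected, giving a cell of dimension $d$ rather than a pseudovertex. The hard part is the $0$-$1$-coordinate claim used in $(\Rightarrow)$; I would justify it either through the Develin--Sturmfels correspondence or by reducing a pseudovertex via a Carath\'eodory-type argument to a tropical sum of a subset of generators, exploiting that tropical sums of $0$-$1$ vectors remain $0$-$1$ vectors.
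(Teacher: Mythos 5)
Your proof follows the same backbone as the paper's: you derive the type formula for $-e_J$ by the direct minimum computation on $v_{B_l}+e_J$, you use Proposition 17 of Develin--Sturmfels (dimension = number of connected components of the type graph minus one) to certify that the candidate points are zero-dimensional, and you handle $\Delta^d$ by a separate analysis of $\bZero$ and the totally disconnected type graph. The type-formula derivation and the $(\Leftarrow)$ connectivity argument (every $j\in J$ is joined to every $k\in J^C$, and pairs inside $J^C$ share the nonempty set $S=\{l : B_l\subseteq J\}$) are correct and essentially identical to the paper's.

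Where you diverge is the $(\Rightarrow)$ direction (no other pseudovertices). The paper argues via tropical line segments: for two pseudovertices $v_J, v_{J'}$ the tropical segment between them bends only at the tropical sum $v_{J\cup J'}$, which is again of the claimed form. You instead reduce to a $0$-$1$ integrality claim via the Develin--Sturmfels correspondence or a ``Carath\'eodory-type'' reduction to tropical sums. Both routes are gestured at rather than carried out. Your route can be made fully rigorous, but I would caution against the tropical-Carath\'eodory framing: Carath\'eodory gives a representation as a tropical linear combination $\bigoplus\lambda_i\odot v_i$ with arbitrary scalars $\lambda_i$, not a scalar-free tropical sum, so that reduction doesn't directly yield the $0$-$1$ claim. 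The cleanest justification of the step you call the hard part is the one implicit in what you write earlier: a zero-dimensional cell has a connected equality graph, so $x_j-x_k$ is a sum of differences $v_{i,j}-v_{i,k}\in\{-1,0,1\}$ along a path and hence is an integer; combining this with $P\subseteq\Delta^d$ from Lemma~\ref{lem:chull of tmp} gives $|x_j-x_k|\le 1$, hence $x$ is a $0$-$1$ vector $-e_J$. Then your observation that $T_j\ne\emptyset$ for $j\in J$ forces $J=\bigcup_{B_l\subseteq J}B_l$ closes the argument. Overall: correct, same strategy as the paper except for a genuinely different (and arguably cleaner, once fleshed out) mechanism for the uniqueness direction; tighten the Carath\'eodory wording and make the integrality plus cornered-hull reasoning explicit.
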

\begin{proof}
  Consider the point $v_J:=c(-e_J)=e_{J^C}$ with canonical coordinates \[v_{J,i}=
  \begin{cases}
    0  & \text{if $i\in J$} \, ,\\
    1& \text{otherwise} \, .
  \end{cases}\] and $\type_{V}(v_J)=(T_1,\ldots,T_{d+1})$. 

  Since the union of the elements of one or more bases of $\mathcal{M}$ consists
  of at least $k$ elements, the index set $J$ has at least $k$ elements and thus
  we have $r\leq d-k+1$ for the cardinality $r$ of $J^C$. 
  We can assume that $J^C=\{1,2,\ldots,r\}$. Then some index $l$ occurs in the 
  $j$-th coordinate $T_j$ if and only if
  \begin{eqnarray}\label{eq:condHypersimplex:typePseudovert}
    v_{l,j}-v_{J,j}&=&\min\{v_{l,1}-1,\ldots,v_{l,r}-1,v_{l,r+1},\ldots,v_{l,d+1}\}\\
    &=&\min\{v_{l,1}-1,\ldots,v_{l,r}-1\}\in \{-1,0\}\nonumber.
  \end{eqnarray}
  For $j\in J$ the left hand side of
  equation~(\ref{eq:condHypersimplex:typePseudovert}) is $v_{l,j}-0\in\{0,1\}$. 
  If $j\in B_l$, we get $v_{l,j}-v_{J,j}=0-0$ and
  this is minimal in~(\ref{eq:condHypersimplex:typePseudovert}) if the coordinates
  $v_{l,i}$ are equal to one for all $i\in J^C$, 
  i.e. $i\notin B_l$. If $j\notin B_l$, we get
  $v_{l,j}-v_{J,j}=1\notin\{-1,0\}$. Therefore, $T_j$ is equal to $\{(l\mid j\in B_l)\,\wedge (i\notin B_l \text{ for all } i\in J^C)\}=T^{(0)}_j\setminus(T^{(0)}_{i_1}\cup\cdots\cup T^{(0)}_{i_r})$.
  
  For $j\in J^C$ the left hand side is $v_{l,j}-1\in\{0,-1\}$. 
  If $j\in B_l$, we get
  $v_{l,j}-v_{J,j}=-1=\min\{v_{l,1}-1,\ldots,v_{l,j}-1,\ldots,v_{l,r}-1\}$. 
  If $j\notin B_l$, we get $v_{l,j}-v_{J,j}=1-1=0$ and this is minimal 
  in~\eqref{eq:condHypersimplex:typePseudovert} if the
  coordinates $v_{l,i}$ are equal to one for all $i\in J^C$, i.e. $i\notin
  B_l$. Therefore, $T_j$ is equal to $\{l\mid j\in B_l\,\text{\bf or } (i\notin
  B_l \text{ for all } i\in J^C)\}=T^{(0)}_j\cup({T^{(0)}_{i_1}}^C\cap\cdots\cap {T^{(0)}_{i_r}}^C)$.

  If $r=d-k+1$, the pseudovertex $v:=c(-e_J)$ is a generator of $P$. Each of
  its type entries contains the index, which is assigned to a basis
  $B\in\mathcal{B}$. Since $B$ is the only basis with
  $i_1,\ldots,i_{d-k+1}\notin B$, its index is the only element of
  $T_j=T^{(0)}_j\setminus(T^{(0)}_{i_1}\cup\cdots\cup T^{(0)}_{i_{d-k+1}})$ for
$j\in B$. For this reason, the generators as defined in~(\ref{eq:generators}) are exactly the tropical vertices of $P$.

  Now we consider the other points of $\PV(V)$, i.e. $r<d-k+1$.
  The intersection of two type entries $T_{j_1}\cap T_{j_2}$ is equal to
  \begin{equation} \label{eq:intersection}T_{j_1}\cap T_{j_2} \ = \
    \begin{cases}
      (T^{(0)}_{j_1}\cap T^{(0)}_{j_2})\setminus(T^{(0)}_{i_1}\cup\cdots\cup T^{(0)}_{i_r})
      & \text{if $j_1,j_2\in J$} \, ,\\
      (T^{(0)}_{j_1}\cap T^{(0)}_{j_2})\cup({T^{(0)}_{i_1}}^C\cap\cdots\cap {T^{(0)}_{i_r}}^C)& \text{otherwise} \, .
    \end{cases}
  \end{equation}
  In the first case of \ref{eq:intersection}, $T_{j_1}\cap T_{j_2}$  
  consists of at least one tropical vertex $v_l$ with $v_{l,j_1}=v_{l,j_2}=0$ and
  $v_{l,i}=1$ for all $i\in J^C$. In the second case there are even
  more tropical vertices allowed and $T_{j_1}\cap T_{j_2}\neq \emptyset$. Hence, Proposition 17 of~\cite{DS04}
  tells us that the cell $X_T$ has dimension $0$, i.e. the given points really
  are pseudovertices of $P$.
  For $J=\bigcup_{i\in I}B_i$ and $J'=\bigcup_{i\in
    I'}B_i$ with $I\neq I'\subseteq[n]$ the tropical line segment	
  between $v_J$ and $v_{J'}$ is the concatenation of the two
  ordinary line segments $[v_J, v_{J\cup \tilde{J}}]$ and
  $[v_{J\cup \tilde{J}},v_{J'}]$. The point $v_{J\cup
    \tilde{J}}$ is again a point of $\PV(P)$. Therefore, there are no
  other pseudovertices as the given points in $\PV(P)$. 
  
  Now we consider the tropical standard simplex $\Delta^d$. If the tropical vertex $v_l:=v_{B_l}$,
  $B_l\in\binom{[d+1]}{1}$, of $\Delta^d$ is given by the vector $v_{B_l}=-e_l$
  ($l=1,\ldots,d+1$), then
  the type of the origin $\bZero$ with respect to $\Delta^d$ is
  $T^{\bZero}=(1,2,\ldots,d+1)$. Therefore, this is an interior point of
  $\Delta^d$. Let $v_J$ with $J\in\bigcup_{j=1}^{d}\binom{[d+1]}{j}$ be any pseudovertex of
$\Delta^d$. Since for $i\in J$ and  $i\notin J$, we have \mbox{$v_{i,i}-v_{J,i}=	
    0=\displaystyle\min\{v_{l,1}-1,\ldots,v_{l,r}-1,v_{l,r+1},\ldots,v_{l,i},\ldots,v_{l,d+1}\}$} and\\\mbox{$v_{i,i}-v_{J,i}=-1=\displaystyle\min\{v_{l,1}-1,\ldots,v_{l,i}-1,\ldots,v_{l,r}-1\}$}, respectively, it follows that the index $i$ is contained in the $i$-th entry of $T$ for all $i=1,\ldots,d+1$, i.e. $T^{\bZero}\subset T$. Hence, $\Delta^d$ is a polytrope.
\end{proof}

Let $v_J= \sum_{i\in J}-e_i=-e_J$ be a pseudovertex of $P$ with 	
$J=\bigcup_{i\in I}B_i$  for $I\subseteq[n]$. If the complement $J^C$ of $J$ is
equal to $\{i_1,i_2,\ldots,i_r\}$ with $r\le d-k+1$, we will denote $v_J$ as	
$e_{i_1,i_2,\ldots,i_r}$ and its type with respect to $P$ as
\[\type_{V}(v_J)=T({v}_J)=\big(T_1({v}_J),\ldots,T_{d+1}({v}_J)\big).\]
Because of the previous lemma, the $i$-th entry of $T({v}_J)$ contains all bases using
edge $i\in J$ that are possible after deleting the edges of $J^C$ in the
corresponding graph $G$ or, equivalently, all bases that are possible after
(re-)inserting edge
$i\in J^C$ into $(V(G),E(G)\setminus\{J^C\})$.

We call a sequence of pseudovertices
$e_{\emptyset},e_{i_1},e_{i_1,i_2},\ldots,e_{i_1,i_2,\ldots,i_{d-k+1}}$, or rather the set \\\mbox{$\{i_1,\ldots,i_{d-k+1}\}\subset[d+1]$}, {\it valid} if 
the edge set $E\setminus\{i_1,\ldots,i_{d-k+1}\}$ contains a spanning tree of the underlying
graph $G$. 
The first point $e_{\emptyset}=\bZero$ is assigned to the total edge set $E$ of
$G$. Then we delete edge after edge such that the graph is
still connected until the edge set forms a connected graph without cycles. So
the last point of a valid sequence is the tropical vertex $v_B$ of $P$
with $B=[d+1]\setminus\{i_1,i_2,\ldots,i_{d-k+1}\}$. 

It turns out that the pseudovertices of the valid
sequences and subsequences of them play a major
role in the calculation of the maximal bounded und unbounded cells of
$P$.

\begin{lemma}
  The maximal bounded cells of $P\in\mathcal{P}_{k,d}$ are of dimension $d-k+1$. They form the tropical convex
  hull of the pseudovertices of a valid sequence
  $\bZero,e_{i_1},e_{i_1,i_2},\ldots,e_{i_1,i_2,\ldots,i_{d-k+1}}$,
  where the last
  pseudovertex is a tropical vertex $v_B$ according to the basis
  $B=[d+1]\setminus\{i_1,i_2,\ldots,i_{d-k+1}\}\in\mathcal{B}$ of $\mathcal{M}$.

  Let $T^{(0)}=(T^{(0)}_1,\ldots,T^{(0)}_{d+1})$ be the type of the
  pseudovertex $\bZero$ with respect to $P$. Then the type
  $T=(T_1,\ldots,T_{d+1})$ of the interior of the bounded cell
  $X_{T}=\tconv(\bZero,e_{i_1},e_{i_1,i_2},\ldots,e_{i_1,i_2,\ldots,i_{d-k+1}})$ is given by 
  $T_{i_1}=T^{(0)}_{i_1},T_{i_2}=T^{(0)}_{i_2}\setminus
  T^{(0)}_{i_1},\ldots,T_{i_{d-k+1}}=T^{(0)}_{i_{d-k+1}}\setminus (T^{(0)}_{i_1}\cup
  T^{(0)}_{i_2}\cup T^{(0)}_{i_{d-k}})$ and $T_j=T^{(0)}_j\setminus (T^{(0)}_{i_1}\cup
  T^{(0)}_{i_2}\cup T^{(0)}_{i_{d-k+1}})$ for all $j\in B$.
\end{lemma}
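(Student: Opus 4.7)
The plan is to compute the interior type $T$ of the candidate cell, read off its dimension via the connected-components rule, and then handle maximality. The key ingredient is Lemma~\ref{lem:pseudovertices}, which furnishes the type of every pseudovertex $e_{i_1,\ldots,i_l}$ explicitly.

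First I would apply that lemma with $J^C=\{i_1,\ldots,i_l\}$ to each of the $d-k+2$ pseudovertices of the valid sequence, and extract the interior type $T$ as the componentwise intersection of these pseudovertex types; this is justified because the type of any pseudovertex in the closure of $X_T$ must contain $T$ componentwise. For a coordinate $j=i_m$, the intermediate types telescope via the elementary set-theoretic identity $A\subseteq B \Rightarrow A\cap(B\cup C)=A$, collapsing to $T_{i_m}=T^{(0)}_{i_m}\setminus(T^{(0)}_{i_1}\cup\cdots\cup T^{(0)}_{i_{m-1}})$. For $j\in B=[d+1]\setminus\{i_1,\ldots,i_{d-k+1}\}$, the family forms a descending chain in $l$, so the intersection is attained at $l=d-k+1$, giving $T_j=T^{(0)}_j\setminus(T^{(0)}_{i_1}\cup\cdots\cup T^{(0)}_{i_{d-k+1}})$ as claimed.

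Next I would read the dimension off the type graph on $[d+1]$. The key observation is that $T^{(0)}_{i_1}\cup\cdots\cup T^{(0)}_{i_{d-k+1}}$ equals $[n]\setminus\{l_B\}$, where $l_B$ is the index of the basis $B$ itself, since any size-$k$ basis contained in $[d+1]\setminus\{i_1,\ldots,i_{d-k+1}\}$ must coincide with $B$. Consequently $T_j=\{l_B\}$ for every $j\in B$, collapsing those $k$ coordinates into one connected component. Moreover $T_{i_l}\cap T_{i_{l'}}=\emptyset$ for $l\neq l'$ (any basis in $T_{i_{l'}}$ with $l'>l$ must avoid $i_l$) and $T_{i_l}\cap\{l_B\}=\emptyset$ since $i_l\notin B$. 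Non-emptiness of each $T_{i_l}$ uses the valid-sequence hypothesis together with the exclusion of bridges: $G\setminus\{i_1,\ldots,i_{l-1}\}$ is connected and contains the edge $i_l$, hence admits a spanning tree through $i_l$ avoiding $i_1,\ldots,i_{l-1}$. Thus the type graph has exactly $d-k+2$ components, yielding dimension $d-k+1$.

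Boundedness is immediate since no $T_j$ is empty. For maximality, I would argue that any maximal bounded cell of $\mathcal{C}_V$ is a polytrope whose vertices are pseudovertices $e_J$ classified by Lemma~\ref{lem:pseudovertices}; attaining dimension $d-k+1$ in $\TA^d$ forces the associated $J^C$'s to form a saturated chain from $\emptyset$ up to a set of size $d-k+1$, and the terminal pseudovertex being a tropical vertex (equivalently, the top $|J^C|$ being $d-k+1$ with $[d+1]\setminus J^C\in\mathcal{B}$) pins down exactly a valid sequence. The main obstacle will be this last implication---verifying the chain structure is forced---which I expect to follow by combining the dimension calculation above with the observation that each one-step enlargement of $|J^C|$ must correspond to deleting a single edge of $G$ while preserving connectedness.
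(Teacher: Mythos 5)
Your computation of the interior type $T$ and the dimension is sound and follows essentially the same route as the paper: intersect the pseudovertex types from Lemma~\ref{lem:pseudovertices} along the sequence, simplify (your observation that $T^{(0)}_{i_1}\cup\cdots\cup T^{(0)}_{i_{d-k+1}}=[n]\setminus\{l_B\}$, so that $T_j=\{l_B\}$ for $j\in B$, is a clean way to make the component count transparent), and read off the dimension from the connected components of the type graph via Proposition~17 of Develin--Sturmfels. The non-emptiness argument via connectivity of $G\setminus\{i_1,\ldots,i_{l-1}\}$ is also correct.

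The genuine gap is the maximality argument, which you yourself flag as ``the main obstacle'' and leave unresolved. You assert that a bounded cell of dimension $d-k+1$ forces its pseudovertices' complements $J^C$ to form a saturated chain from $\emptyset$ to a size-$(d-k+1)$ set, but this is precisely the content that needs a proof, not a restatement. Two things must actually be shown: (a) that $X_T$ is not a proper face of any larger bounded cell, and (b) that there are no other maximal bounded cells. The paper handles both by direct case analysis on what happens when a pseudovertex $e_J$ from outside the chain is adjoined to the type: if $J^C\not\subseteq B^C$ and $J\cap B\neq\emptyset$, some type entry becomes empty (unbounded cell); if $J^C\not\subseteq B^C$ and $J\cap B=\emptyset$, or if $J^C\subsetneq B^C$, one shows a strict cardinality drop destroys the pre-partition structure. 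It also checks that removing a pseudovertex from the chain drops the dimension. None of this is trivial, and the saturated-chain claim you want to use is essentially equivalent to these verifications rather than a shortcut around them. Without carrying out some version of this analysis, the proof is incomplete.
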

\begin{proof}
  First, we will show that this sequence really defines a bounded cell
  of $P$, i.e. $T_{j}\neq\emptyset$ for all $j\in [d+1]$.
  So consider the type entry at some coordinate $i_j\in B^C$ 
  \begin{eqnarray*}
    T_{i_j}&=&T_{i_j}(\bZero)\cap T_{i_j}(e_{i_1})\cap \ldots \cap\\ &&
    T_{i_j}(e_{i_1,\ldots,i_{j-1}})\cap\\ &&
    T_{i_j}(e_{i_1,\ldots,i_{j}})\cap\ldots\cap\\ &&
    T_{i_j}(e_{i_1,\ldots,i_{d-k+1}})\\
    &=&\{l\mid i_j\in B_l\}\cap\{l\mid i_j\in B_l \text{ and } i_1\notin
    B_l\}\cap\ldots\cap\\ &&\{l\mid i_j\in B_l \text{ and }
    (i_1,\ldots,i_{j-1}\notin B_l)\}\cap\\ &&
    \{l\mid i_j\in B_l \text{ or } (i_1,\ldots,i_{j}\notin
    B_l)\}\cap\ldots\cap\\ &&
    \{l\mid i_j\in B_l \text{ or } (i_1,\ldots,i_{d-k+1}\notin B_l)\}\\
    &=& \{l\mid i_j\in B_l \text{ and }(i_1,\ldots,i_{j-1}\notin B_l)\}\\
    &=& T^{(0)}_{i_j}\setminus(T^{(0)}_{i_1}\cup\ldots\cup T^{(0)}_{i_{j-1}}).
  \end{eqnarray*}
  The cardinality of $T_{i_j}=T^{(0)}_{i_j}\cap {T^{(0)}_{i_1}}^C\cap\ldots\cap
  {T^{(0)}_{i_{j-1}}}^C$ is equal to the number of tropical vertices
  $v$ of $P$ with $v_{i_j}=0$ and $v_{i_1}=\ldots=v_{i_{j-1}}=1$ (in
  canonical coordinates) respectively to the number of bases $B$ with $i_j\in B$ and
  $i_1,\ldots,i_{j-1}\notin B$, which is greater than $0$ since we consider only
  valid sequences. So every type
  coordinate $T_{i_j}$ contains at least
  one entry. In the case of uniform matroids we have the choice of $d+1-j$ {\it free} coordinates from which
  $k-1$ must be equal to $0$, i.e. the cardinality
  of $T_{i_j}$ is equal to $\binom{d+1-j}{k-1}$.

  Analogously, the other type entries $T_j=T^{(0)}_j\setminus (T^{(0)}_{i_1}\cup
  T^{(0)}_{i_2}\cup T^{(0)}_{i_{d-k+1}})=\{v_B\}$ for $j\in B$ and their cardinality $|T_j|=1$ can be
  verified. Furthermore, we have $T_1\cup \cdots \cup T_{d+1}=[n]$,
  because $T^{(0)}_1\cup \cdots \cup T^{(0)}_{d+1}=[n]$.
  Since no type entry of $T$ is empty, the cell $X_{T}$ is bounded. More precisely,
  $T_{i_1},\ldots,T_{i_{d-k+1}}$ is a partition of the indices of
  $\V(P)\setminus\{v_B\}$, and the other type coordinates each contain the
  index of the tropical vertex $v_B$; we call this a {\it pre-partition}.
  By Proposition 17 in \cite{DS04}, the dimension of $X_{T}$ is $d-k+1$.
  
  Removing one pseudovertex $e_{i_1,\ldots,i_r}$ with $r\in [d-k+1]$ from a valid sequence, we obtain
  $T_{i_{r+1}}=T^{(0)}_{i_{r+1}}\setminus(T^{(0)}_{i_1}\cup\cdots\cup T^{(0)}_{i_{r-1}})$ and
  $T_{i_r}\cap T_{i_{r+1}}\neq\emptyset$. This yields a bounded cell with lower
  dimension than $d-k+1$.

  Adding a pseudovertex $e_J$ to $X_{T}$, $J\neq B$ with $J^C=\{j_1,\ldots,j_r\}$ ($1\leq r\leq d-k+1$) and
  $(j_1,\ldots,j_l)\neq (i_1,\ldots,i_l)$ for all $l=1,\ldots,r$, we
  consider $T'=T\cap\type_{P}(e_J)$. To keep the status of a maximal bounded
  cell, the type of the cell still has to be a pre-partition of $[n]$ without empty
  type entries. There are three different cases (1)-(3).

  (1) For $J^C\not\subseteq B^C$ and $J\cap B\neq\emptyset$, there is an
  index $j\in J\cap B$. We consider the $j$-th type entry of $T'$ that is 
  equal to $T_{j}\cap T^{(0)}_j(e_J)=T^{(0)}_j\cap
  {T^{(0)}_{i_1}}^C\cap\cdots\cap {T^{(0)}_{i_{d-k+1}}}^C\cap {T^{(0)}_{j_1}}^C\cap\cdots\cap
  {T^{(0)}_{j_r}}^C$. This is an empty set since there are no tropical vertices of
  $P$ with $d-k+1+r$ entries equal to one. The cells with empty type entries are
not bounded.

  (2) For $J^C\not\subseteq B^C$ and $J\cap B=\emptyset$, we consider an index
  $j\in J\cap B^C$ that corresponds to a valid sequence with $i_t=j$, $t\in\{1,\ldots,d-k+1\}$. The $j$-th type entry of
  $T'$ is equal to 
  $T^{(0)}_j(e_J)\cap T_j=T^{(0)}_j\cap
  {T^{(0)}_{j_1}}^C\cap\cdots\cap {T^{(0)}_{j_r}}^C\cap
  {T^{(0)}}^C_{i_1}\cap\cdots\cap {T^{(0)}}^C_{i_{t-1}}$. Since
  $J^C\not\subseteq\{i_1,\ldots,i_{t-1}\}$, the cardinality of $T'_j$ is less than $|T^{(0)}_j|$,
  and we get no valid partition of $[n]$.

  (3) For $J^C\subset B^C$ we have $r<d+1-k$ (otherwise $J=B$). We choose the smallest index $j$ such that $i_j\in J\cap B^C$. That means
  $i_1,\ldots,i_{j-1}\in J^C\subset B^C$. Since we have 
  $(i_1,\ldots,i_l)\neq(j_1,\ldots j_l)$ for all 
  $l=1,\ldots,r$, we know that
  $(i_1,\ldots,i_{j-1})\neq(j_1,\ldots,j_r)$ leading to $|T_{i_j}|=|T^{(0)}_{i_j}\cap
  {T^{(0)}_{i_1}}^C\cap\cdots\cap {T^{(0)}_{i_{j-1}}}^C|>|T'_{i_j}|=|T^{(0)}_{i_j}\cap
  {T^{(0)}_{j_1}}^C\cap\cdots\cap {T^{(0)}_{j_r}}^C|$. As in the other two cases this is
  no valid pre-partition of $[n]$.

  In every case the adding of a pseudovertex from another sequence leads to
  unfeasible types of bounded cells. 

  Similarly, it is not difficult to see that removing a pseudovertex and adding 
  a new one from another sequence leads to unfeasible types or lower dimensional
  bounded cells, i.e. mixing of valid sequences is not possible. Altogether, we
get the desired maximal bounded cells of $P$.
\end{proof}

There are $n\cdot(d+1-k)!$ maximal bounded cells of $P$ since we have 
$(d+1-k)!$ possibilities to add edges to a spanning tree until we get the whole graph. 
\begin{example}
  The tropical matroid polytope $P$ from Example \ref{ex:RunEx1} is contained
  in the $4$-dimensional tropical hyperplane with apex $\bZero$. It is shown in
  Figure~\ref{fig:PVG} as the abstract graph of the vertices and edges of its bounded subcomplex. Its maximal bounded cells are ordinary
  simplices of dimension $d-k+1=2$, whose pseudovertices are the tropical
  vertices $V=\{v_{B_1},\ldots,v_{B_8}\}$ (dark), the origin $\bZero$ (the
  centered point) and the five corners $c_i=e_i$ (light).  
  The four tropical vertices with indices $3$,~$4$,~$5$ and $8$ correspond to the
  bases that are possible after deleting edge $1$ in the underlying graph and
  therefore adjacent to the point $e_1$. One valid sequence $i_1,i_2$ leading 
  to a maxim bounded cell is for example the (tropical/ordinary) convex hull of
  $e_{\emptyset}=(0,0,0,0,0),\,e_4=(0,0,0,0,1)$ and $e_{4,2}=v_{B_1}=(0,0,1,0,1)$, 
  i.e. $i_1=4$ and $i_2=2$, with interior cell type $(1,1,36,1,24578)$, 
  representing the basis $B_1=\{1,2,4\}$.
  \begin{figure}[htb]
    \centering
    \psfrag{e0}{$e_0$}
    \psfrag{e1}{$e_1$}
    \psfrag{e2}{$e_2$}
    \psfrag{e3}{$e_3$}
    \psfrag{e4}{$e_4$}
    \psfrag{0}{$1$}
    \psfrag{1}{$2$}
    \psfrag{2}{$3$}
    \psfrag{3}{$4$}
    \psfrag{4}{$5$}
    \psfrag{5}{$6$}
    \psfrag{6}{$7$}
    \psfrag{7}{$8$}
    \subfigure{\includegraphics[trim = 20mm 10mm 20mm 30mm, clip,scale=0.5]{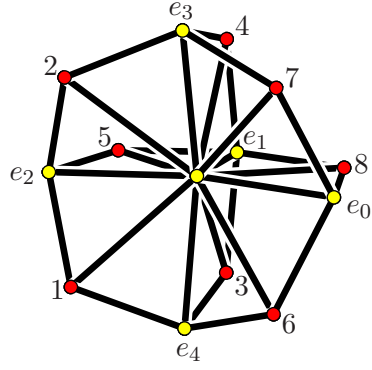}}
    \caption{\label{fig:PVG}The abstract 1-skeleton of the bounded subcomplex of the tropical matroid
polytope of Example~\ref{ex:RunEx1}.}
  \end{figure}
\end{example}

All cells in the tropical complex $\mathcal{C}_{V}$, bounded or not, are pointed, i.e. they do not
contain an affine line. So each cell of $\mathcal{C}_{V}$ must contain
a bounded cell as an ordinary face.

We now state the main theorem about the coarse types of maximal cells in the
cell complex of a tropical matroid polytope. Let $b_{I,J}$ denote the number of bases
  $B\in\mathcal{B}$ with $I\subseteq B$ and $J\subseteq B^C$.

\begin{theorem}\label{thm:coarse types of tmp}
 Let $\mathcal{C}$ be the
tropical complex induced by the tropical vertices of a tropical matroid polytope
$P\in\mathcal{P}_{k,d}$. The set of all coarse types of the maximal cells arising in $\mathcal{C}$ is
given by those tuples $(t_1,\ldots,t_{d+1})$ with
  \begin{equation}\label{eq:coarse types of tmp} t_j \ = \
    \begin{cases}
      b_{\{i_1\},\emptyset}+b_{\,\emptyset,\{i_1,i_2,\ldots,i_{d'+1}\}}  & \text{if $j=i_1$} \, ,\\
      b_{\{i_l\},\{i_1,\ldots,i_{l-1}\}} & \text{if $j=i_l\in \{i_2,\ldots i_{d'+1}\}$} \, ,\\
      0 & \text{otherwise} \, .
    \end{cases}
  \end{equation} 
  where $e_{i_1},\ldots,e_{i_1,i_2,\ldots,i_{d'}}$ form a subsequence of a valid
  sequence of $P$.
\end{theorem}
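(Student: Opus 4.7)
The proof proceeds in two steps. In the forward step, for every parameter $(d', i_1, \ldots, i_{d'+1})$ satisfying the stated condition I explicitly construct a maximal cell realizing the coarse type. In the converse step, every maximal cell is shown to arise from such a parameter. Both rely on a direct computation with the minimizer definition of the fine type.

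For the forward direction, I define $x \in \TA^d$ by
\begin{equation*}
x_{i_l} \;=\; -l\varepsilon \ \text{ for } l=1,\ldots,d'+1, \qquad x_j \;=\; -M \ \text{ for } j\notin\{i_1,\ldots,i_{d'+1}\},
\end{equation*}
with $\varepsilon<1/(d'+1)$ and $M$ sufficiently large. For each generator $v_B$, the value $(v_B)_j-x_j$ exceeds $M-1$ whenever $j\notin\{i_1,\ldots,i_{d'+1}\}$, so those indices cannot attain the minimum. Among the remaining indices, the value at $i_l$ is $l\varepsilon$ if $i_l\in B$ and $1+l\varepsilon$ otherwise, and the gap condition on $\varepsilon$ guarantees the minimum is uniquely attained at the smallest $l$ with $i_l\in B$, or at $l=1$ if no such $l$ exists. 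Counting bases in each bin reproduces the formula; pairwise disjointness of the non-empty entries together with Proposition~17 of~\cite{DS04} gives $\dim X_T=d$, so $X_T$ is a maximal cell.

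For the converse, let $X_T$ be a maximal cell with $S:=\{j:T_j\neq\emptyset\}=\{i_1,\ldots,i_m\}$ and $d':=m-1$. Since $X_T$ is pointed, every $-e_j$ with $j\notin S$ lies in its recession cone, so I choose an interior point $x$ with $x_j$ arbitrarily small for $j\notin S$ and relabel $S$ so that $x_{i_1}>x_{i_2}>\cdots>x_{i_m}$. The defining inequalities of the cell force the gap $x_{i_1}-x_{i_m}<1$: any larger gap would reroute a basis witnessing $T_{i_m}\neq\emptyset$ to $T_{i_1}$, contradicting the assumed type. Running the same minimizer analysis as in the forward direction then produces precisely the partition of the formula. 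The validity condition $[d+1]\setminus\{i_1,\ldots,i_{d'}\}$ containing a basis is immediate: any $B\in T_{i_m}$ satisfies $i_m\in B$ and $i_1,\ldots,i_{m-1}\notin B$, hence $B\subseteq [d+1]\setminus\{i_1,\ldots,i_{m-1}\}$, and the previous lemma then provides the full valid sequence of $P$ extending $\{i_1,\ldots,i_{d'}\}$.

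The main obstacle is the gap inequality $x_{i_l}-x_{i_{l+1}}<1$ in the converse direction, which must be extracted from the cell's defining inequalities rather than posited. This requires a case analysis on each basis contributing to some non-empty $T_{i_l}$, tracking how the uniqueness of its minimizer constrains the differences of the $x_{i_l}$'s. Once this bound is in place, both directions reduce to the same minimizer computation and the proof is complete; any flexibility in the choice of $i_{d'+1}$ corresponds precisely to the freedom of extending the sub-sequence $(i_1,\ldots,i_{d'})$ by one element inside a valid sequence of $P$.
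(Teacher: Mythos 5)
Your proof is correct, but it takes a genuinely different route from the paper's. The paper proves the theorem by a three-way case analysis of the maximal unbounded cells according to the bounded face they contain: (1) a cell of dimension $d-k+1$ coming from a complete valid sequence $\bZero,e_{i_1},\ldots,e_{i_1,\ldots,i_{d-k+1}}$, completed with $k-1$ of the $k$ free extreme rays; (2) a cell of dimension $d'<d-k+1$ from a subsequence $e_{i_1},\ldots,e_{i_1,\ldots,i_{d'+1}}$, completed with rays in all remaining directions; (3) a cell of dimension $d-k$ associated with a non-basis. In each case the paper computes the fine type of the interior as an intersection of the vertex types supplied by the preceding lemma on bounded cells. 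You instead give a self-contained two-way argument: in the forward direction you construct an explicit point $x$ (small perturbations $-l\varepsilon$ on the chosen indices, $-M$ elsewhere), read off the fine type from the minimizer of $v_{B,j}-x_j$, and use pairwise disjointness of the $T_j$ together with Proposition~17 of Develin--Sturmfels to see the cell is full-dimensional; in the converse you use the recession cone $\pos\{-e_j: T_j=\emptyset\}$ to push the non-active coordinates away and derive the gap constraint from $T_{i_m}\neq\emptyset$ together with disjointness, reducing to the same minimizer computation. This unifies the paper's three cases into a single computation, does not rely on the lemma about maximal bounded cells, and makes the "every maximal cell is of this form" direction explicit (which the paper leaves somewhat implicit in its enumeration). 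One minor slip you should fix: in the final paragraph the relevant bound is on the total spread, namely $x_{i_1}-x_{i_{d'+1}}<1$, which is what guarantees $-x_{i_{d'+1}}<1-x_{i_1}$ in the ordering of the candidate minima; the consecutive-difference statement $x_{i_l}-x_{i_{l+1}}<1$ is weaker and does not suffice.
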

\begin{proof}
 Depending on the maximal bounded (ordinary) face in the boundary, there are 
  three types of maximal unbounded cells in $\mathcal{C}_{V}$.
  
  The first one, $X_{T}$, contains a maximal bounded cell of dimension $d-k+1$, which is the tropical convex hull of the
  pseudovertices of a complete valid sequence
  $\bZero,e_{i_1},e_{i_1,i_2},\ldots,e_{i_1,i_2,\ldots,i_{d-k+1}}$ where
  $B^C=\{i_1,\ldots,i_{d-k+1}\}$ is the complement of a basis of $\mathcal{M}$. To get full-dimensional we have
  the choice between $k-1$ of $k$ free directions $-e_{i}$, $i\in B$. So let
  $-e^{\infty}_{j_1},\ldots,-e^{\infty}_{j_{k-1}}$ be the {\it extreme rays} of
  $X_{T}$, and $(T^{(0)}_1,\ldots,T^{(0)}_{d+1})$ be the type of the pseudovertex $\bZero$
  with respect to $P$. 
  Then the type
  $T=(T_1,\ldots,T_{d+1})$ of the interior of this unbounded cell
  $X_{T}$ is given by the intersection of the types of its vertices and therefore
  $T_{i_1}=T^{(0)}_{i_1},T_{i_2}=T^{(0)}_{i_2}\setminus
  T^{(0)}_{i_1},\ldots,T_{i_{d-k+1}}=T^{(0)}_{i_{d-k+1}}\setminus (T^{(0)}_{i_1}\cup
  T^{(0)}_{i_2}\cup T^{(0)}_{i_{d-k}})$, $T_i=T^{(0)}_i\setminus (T^{(0)}_{i_1}\cup
  T^{(0)}_{i_2}\cup T^{(0)}_{i_{d-k+1}})$ for $i\notin B^C\cup\{j_1,\ldots,j_{k-1}\}$ and
  $T_{j_1}=\ldots=T_{j_{k-1}}=\emptyset$. Choosing $d'=d-k+1$ and $i_{d'+1}=i$, we
  get the coarse type entries of equation~(\ref{eq:coarse types of tmp}).

  The second type, $X_{T}$, of maximal unbounded cells contains a bounded cell of
  lower dimension $d'\in\{0,\ldots,d-k\}$, which is the tropical convex hull of the
  pseudovertices of some subsequence
  $e_{i_1},e_{i_1,i_2},\ldots,e_{i_1,i_2,\ldots,i_{d'+1}}$.
  To get full-dimensional we still need the extreme rays
  $e_{i_1,i_2,\ldots,i_{d'+1}}-e^{\infty}_l$ for all directions
  $l\notin\{i_1,\ldots,i_{d'+1}\}$. Then the type
  $T=(T_1,\ldots,T_{d+1})$ of the interior of this unbounded cell
  $X_{T}$ is given by 
  $T_{i_1}=T^{(0)}_{i_1}\cup({T^{(0)}_{i_1}}^C\cap\cdots\cap {T^{(0)}_{i_{d'+1}}}^C),T_{i_2}=T^{(0)}_{i_2}\setminus
  T^{(0)}_{i_1},\ldots,T_{i_{d'+1}}=T^{(0)}_{i_{d'+1}}\setminus (T^{(0)}_{i_1}\cup
  T^{(0)}_{i_2}\cup T^{(0)}_{i_{d'}})$, $T_j=\emptyset$ for $j\notin
  \{i_1,\ldots,i_{d'+1}\}$ with the coarse type as given in equation~(\ref{eq:coarse
    types of tmp}).

  The third and last type of maximal unbounded cells contains a bounded cell of
  dimension $d-k$ and is assigned to the non-bases of $\mathcal{M}$, i.e. to
  the subsets of $E$ with cardinality $k$ that are not bases. Let
  $i_1,\ldots,i_{d-k+1}$ be the complement of a non-basis $N$ and 
  $i_1,\ldots,i_{d-k}$ a valid subsequence. Then there is 
  an unbounded cell $X_{T}$ that is the tropical convex hull of the pseudovertices
  $\bZero, e_{i_1},\ldots,e_{i_{d-k}}$ and the extreme rays $\bZero-e^{\infty}_l$ for all directions
  $l\notin\{i_1,\ldots,i_{d-k+1}\}$ and with type entries 
  $T_{i_1}=T^{(0)}_{i_1},T_{i_2}=T^{(0)}_{i_2}\setminus
  T^{(0)}_{i_1},\ldots,T_{i_{d-k+1}}=T^{(0)}_{i_{d-k+1}}\setminus (T^{(0)}_{i_1}\cup
  T^{(0)}_{i_2}\cup T^{(0)}_{i_{d-k}})$, $T_j=\emptyset$ for $j\notin
  \{i_1,\ldots,i_{d-k+1}\}$.  Choosing $d'=d-k$ and observing that
  $b_{\,\emptyset,\{i_1,i_2,\ldots,i_{d'+1}\}}=0$ for the non-basis 
  $\{i_1,i_2,\ldots,i_{d'+1}\}^C$ we get the desired result.
\end{proof}

Restricting ourselves to the uniform case, we get the following result.

\begin{corollary}
 The coarse types of the maximal cells in the tropical complex induced by the
  tropical vertices of the tropical hypersimplex $\Delta_k^d$ in $\TA^d$ with
  $2\le k< d+1$ are up to symmetry of $\Sym(d+1)$ given by
  \[\left(\binom{d+1-\alpha}{k}+\binom{d}{k-1},\,\binom{d-1}{k-1},\ldots,\,\binom{d-(\alpha-1)}{k-1},\underbrace{0,\ldots,0}_{d+1-\alpha}\right)\]
  where $0\le\alpha\le d+2-k$ correlates to the maximal dimension of a bounded
  cell of its boundary. 
\end{corollary}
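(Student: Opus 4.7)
The plan is to derive the corollary as a direct specialization of Theorem~\ref{thm:coarse types of tmp} to the uniform matroid $U_{k,d+1}$ whose tropical matroid polytope is $\Delta_k^d$. Two structural features of $U_{k,d+1}$ collapse the combinatorial data into a closed binomial form. First, every $k$-subset of $[d+1]$ is a basis of $U_{k,d+1}$, so for any sequence $i_1,\ldots,i_{d'+1}\in[d+1]$ with $d'+1\leq d-k+1$ the complement $[d+1]\setminus\{i_1,\ldots,i_{d'}\}$ has cardinality at least $k$ and therefore contains a basis; in other words every such sequence is (a subsequence of) a valid sequence as defined before Theorem~\ref{thm:coarse types of tmp}. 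Second, the symmetric group $\Sym(d+1)$ acts on $[d+1]$ and permutes these sequences transitively for each fixed $d'$. Hence, up to $\Sym(d+1)$ it suffices to evaluate formula~\eqref{eq:coarse types of tmp} on one representative sequence per value of $d'$.

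Next I would compute the numbers $b_{I,J}$ explicitly: for $U_{k,d+1}$ one has $b_{I,J}=\binom{d+1-|I|-|J|}{k-|I|}$ whenever $I\cap J=\emptyset$ and $|I|\leq k$, and zero otherwise. Substituting this into~\eqref{eq:coarse types of tmp} and setting $\alpha:=d'+1$, the first entry becomes $b_{\{i_1\},\emptyset}+b_{\emptyset,\{i_1,\ldots,i_{d'+1}\}}=\binom{d}{k-1}+\binom{d+1-\alpha}{k}$, the $l$-th entry for $2\leq l\leq\alpha$ becomes $b_{\{i_l\},\{i_1,\ldots,i_{l-1}\}}=\binom{d-(l-1)}{k-1}$, and the remaining $d+1-\alpha$ entries vanish. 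After using the $\Sym(d+1)$-symmetry to place the nonzero coordinates in the first $\alpha$ positions, this is exactly the tuple appearing in the statement.

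Finally I would match the admissible range of $\alpha$ against the three classes of maximal unbounded cells distinguished in the proof of Theorem~\ref{thm:coarse types of tmp}. Complete valid sequences ($d'=d-k+1$) give $\alpha=d-k+2$; shortened subsequences with bounded face of dimension $d'\in\{0,\ldots,d-k\}$ give $\alpha\in\{1,\ldots,d-k+1\}$; and the non-basis class contributes nothing, because every $k$-subset of $[d+1]$ is a basis of $U_{k,d+1}$. A brief sanity check via the hockey-stick identity $\sum_{l=0}^{\alpha-1}\binom{d-l}{k-1}=\binom{d+1}{k}-\binom{d+1-\alpha}{k}$ confirms that the entries of the tuple sum to $\binom{d+1}{k}=n$, as is required for a top-dimensional cell of $\mathcal{C}_V$. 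The argument is essentially bookkeeping; the only substantive step is correctly translating the parameter $d'$ (together with the case distinction from the proof of Theorem~\ref{thm:coarse types of tmp}) into the single parameter $\alpha$ used here.
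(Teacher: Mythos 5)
Your proof is correct and is the natural specialization argument that the paper leaves implicit (the corollary appears with no proof of its own). You correctly compute $b_{I,J}=\binom{d+1-|I|-|J|}{k-|I|}$ for the uniform matroid, substitute into the formula of Theorem~\ref{thm:coarse types of tmp} with $\alpha=d'+1$, use the $\Sym(d+1)$-action to place the nonzero coordinates first, and match the three cell classes from the theorem's proof; the hockey-stick sanity check that the entries sum to $n=\binom{d+1}{k}$ is a nice independent confirmation that the fine type of a top-dimensional cell is a partition of the index set. One small point worth flagging, though it reflects on the paper's statement rather than your argument: your bookkeeping yields $\alpha\in\{1,\ldots,d-k+2\}$, whereas the corollary as printed allows $\alpha=0$, for which the displayed tuple is the all-zero vector and hence not a coarse type of any cell. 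Your derived range is the accurate one, and noting this explicitly would have strengthened the write-up.
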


Now we relate the combinatorial properties of the tropical complex
$\mathcal{C}$ of a tropical
matroid polytope to algebraic properties of a monomial ideal which is
assigned to $\mathcal{C}$. 
As a
direct consequence of Theorem~\ref{thm:DJS2009} and Corollary 3.5
in~\cite{DJS09}, we can state the generators
of the coarse type ideal \[I=\langle x^{\mathbf{t}(p)}\colon
p\in\TA^d\rangle\subset\KK[x_1,\ldots,x_{d+1}],\] where $\mathbf{t}(p)$ is the
coarse type of $p$ and
$x^{\mathbf{t}(p)}={x_1}^{{\mathbf{t}(p)}_1}{x_2}^{{\mathbf{t}(p)}_2}\cdots{x_{d+1}}^{{\mathbf{t}(p)}_{d+1}}$.
\begin{corollary}The coarse type ideal $I$ 
is equal to \[\langle x_{i_1}^{t_{i_1}}x_{i_2}^{t_{i_2}}\cdots x_{i_{d'+1}}^{t_{i_{d'+1}}}\colon
[d+1]\setminus\{i_1,\ldots,i_{d'}\} \text{ contains a basis }\rangle\] with
$(t_{i_1},t_{i_2},\ldots,t_{i_{d'+1}})=\big(b_{\{i_1\},\emptyset}+b_{\,\emptyset,\{i_1,i_2,\ldots,i_{d'+1}\}},b_{\{i_2\},\{i_1\}},\ldots,b_{\{i_{d'+1}\},\{i_1,\ldots,i_{d'}\}}\big)$.
\end{corollary}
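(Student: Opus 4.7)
The plan is to deduce this corollary directly from Theorem~\ref{thm:coarse types of tmp} together with Corollary 3.5 of~\cite{DJS09}, without any new combinatorial content. The only real work is bookkeeping: translating the tuple description of coarse types of inclusion-maximal cells into an explicit list of monomial generators.

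First I would invoke Corollary 3.5 of~\cite{DJS09}, which tells us that $I$ is already generated by the monomials $x^{\mathbf{t}(p)}$ associated to the coarse types of the inclusion-maximal cells of $\mathcal{C}_V$. This reduces the task to enumerating those coarse types. Next I would apply Theorem~\ref{thm:coarse types of tmp}: the set of coarse types of the maximal cells of $\mathcal{C}_V$ is precisely the set of tuples $(t_1,\ldots,t_{d+1})$ defined by~\eqref{eq:coarse types of tmp}, indexed by subsequences $e_{i_1},\ldots,e_{i_1,i_2,\ldots,i_{d'}}$ of a valid sequence of $P$.

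Now I would carry out the translation. By the third case in~\eqref{eq:coarse types of tmp}, $t_j=0$ whenever $j\notin\{i_1,\ldots,i_{d'+1}\}$, so each such monomial involves only the variables $x_{i_1},\ldots,x_{i_{d'+1}}$ with exponents given by the first two cases of~\eqref{eq:coarse types of tmp}. These are exactly the exponents listed in the tuple $(t_{i_1},\ldots,t_{i_{d'+1}})$ of the statement. Finally I would unpack the valid-sequence condition: by the definition given before the lemma on maximal bounded cells, the pseudovertices $e_{i_1},\ldots,e_{i_1,\ldots,i_{d'}}$ extend to a valid sequence precisely when $[d+1]\setminus\{i_1,\ldots,i_{d'}\}$ contains a spanning tree of $G$, i.e. a basis of $\mathcal{M}$. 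Substituting this equivalence into the indexing set yields exactly the presentation of $I$ stated in the corollary.

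There is no real obstacle here; the only point that requires any care is making sure that one indexes over \emph{sub}sequences of valid sequences (allowing $d'\in\{0,1,\ldots,d-k+1\}$) rather than only full valid sequences, since Theorem~\ref{thm:coarse types of tmp} covers all three types of maximal cells (bounded-dimension $d-k+1$, lower-dimensional bounded, and non-basis) within the single formula~\eqref{eq:coarse types of tmp}. Once this is noted, the corollary follows by inspection.
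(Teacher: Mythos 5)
Your proposal is correct and follows essentially the same route as the paper: Corollary 3.5 of \cite{DJS09} reduces the claim to listing the coarse types of the inclusion-maximal cells of $\mathcal{C}_V$, Theorem~\ref{thm:coarse types of tmp} supplies that list, and the translation between the subsequence-of-a-valid-sequence condition and the condition that $[d+1]\setminus\{i_1,\ldots,i_{d'}\}$ contain a basis is precisely the bookkeeping the paper leaves implicit when it states the corollary as a ``direct consequence.'' Your closing remark about letting $d'$ range over $\{0,1,\ldots,d-k+1\}$, so as to pick up all three kinds of maximal cells treated in the proof of Theorem~\ref{thm:coarse types of tmp}, correctly identifies the one place where care is needed.
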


\begin{example}The tropical complex $\mathcal{C}$ of the tropical matroid 
polytope of Example~\ref{ex:RunEx1} has 73 maximal cells. There are five maximal cells for the case
$d'=0$ with $t_{i_{d'+1}}=8$ and $t_j=0$ for $j\neq i_{d'+1}$, and 48 for the case 
$d'=2$ according to the 8 bases. Finally, there are 20 maximal cells for the case $d'=1$, where
$[d+1]\setminus\{i_1\}$ contains a basis, but $[d+1]\setminus\{i_1,i_2\}$ does
not necessarily contain a basis.

The coarse type ideal of $\mathcal{C}$ is given by
\begin{eqnarray*}I&=\langle&
{x_1}^1{x_2}^2{x_3}^5,{x_1}^1{x_2}^5{x_3}^2,{x_1}^2{x_2}^1{x_3}^5,{x_1}^4{x_2}^1{x_3}^3,{x_1}^4{x_2}^3{x_3}^1,{x_1}^2{x_2}^5{x_3}^1,{x_2}^2{x_3}^6,{x_2}^6{x_3}^2,\\
&&{x_2}^2{x_3}^5{x_4}^1,{x_2}^5{x_3}^2{x_4}^1,{x_1}^2{x_3}^6,{x_1}^5{x_3}^3,{x_1}^2{x_3}^5{x_4}^1,{x_1}^4{x_3}^3{x_4}^1,{x_3}^8,{x_3}^5{x_4}^3,{x_1}^8,{x_1}^5{x_2}^3,\\
&&{x_1}^5{x_4}^3,{x_1}^4{x_3}^1{x_4}^3,{x_1}^4{x_2}^3{x_4}^1,{x_1}^4{x_2}^1{x_4}^3,{x_0}^2{x_4}^6,{x_4}^8,{x_0}^2{x_1}^1{x_4}^5,{x_0}^1{x_1}^2{x_4}^5,{x_1}^2{x_4}^6,\\
&&{x_0}^1{x_2}^2{x_4}^5,{x_2}^2{x_4}^6,{x_0}^2{x_2}^1{x_4}^5,{x_1}^1{x_2}^2{x_4}^5,{x_1}^2{x_2}^1{x_4}^5,{x_0}^2{x_3}^1{x_4}^5,{x_3}^3{x_4}^5,{x_2}^2{x_3}^1{x_4}^5,\\
&&{x_1}^2{x_3}^1{x_4}^5,{x_0}^1{x_2}^5{x_4}^2,{x_2}^6{x_4}^2,{x_2}^5{x_3}^1{x_4}^2,{x_1}^1{x_2}^5{x_4}^2,{x_0}^2{x_3}^6,{x_0}^1{x_2}^2{x_3}^5,{x_0}^2{x_2}^1{x_3}^5,\\
&&{x_0}^2{x_1}^1{x_3}^5,{x_0}^1{x_1}^2{x_3}^5,{x_0}^2{x_3}^5{x_4}^1,{x_0}^1{x_2}^5{x_3}^2,{x_0}^3{x_2}^5,{x_2}^8,{x_0}^1{x_1}^2{x_2}^5,{x_1}^2{x_2}^6,{x_1}^2{x_2}^5{x_4}^1,\\
&&{x_0}^1{x_1}^4{x_2}^3,{x_0}^6{x_4}^2,{x_0}^5{x_1}^1{x_4}^2,{x_0}^5{x_1}^2{x_4}^1,{x_0}^3{x_1}^4{x_4}^1,{x_0}^1{x_1}^4{x_4}^3,{x_0}^5{x_2}^1{x_4}^2,{x_0}^5{x_2}^3,\\
&&{x_0}^5{x_1}^2{x_2}^1,{x_0}^3{x_1}^4{x_2}^1,{x_0}^5{x_3}^1{x_4}^2,{x_0}^5{x_2}^1{x_3}^2,{x_0}^5{x_3}^2{x_4}^1,{x_0}^6{x_3}^2,{x_0}^5{x_1}^1{x_3}^2,{x_0}^6{x_1}^2,\\
&&{x_0}^3{x_1}^5,{x_0}^5{x_1}^2{x_3}^1,{x_0}^3{x_1}^4{x_3}^1,{x_0}^8,{x_0}^1{x_1}^4{x_3}^3\,\rangle\subseteq
R:=\RR[{x_0},{x_1},{x_2},{x_3},{x_4}]\end{eqnarray*}We obtain its minimal free
resolution, which is induced by $\mathcal{C}$\[\mathcal{F}_{\bullet}^{\mathcal{C}}\colon\,0\rightarrow R^{14}\rightarrow R^{78}\rightarrow
R^{172}\rightarrow R^{180}\rightarrow R^{73}\rightarrow I\rightarrow
0,\]where the exponents $i$ of the free graded $R$-modules $R^i$ correspond to
the entries of the $f$-vector $f(\mathcal{C})=(1,14,78,172,180,73)$ of $\mathcal{C}$.
\end{example}

In (ordinary) convexity swapping between interior and exterior description of a
polytope is a famous problem known as the \emph{convex hull problem}. For a
uniform matroid it is possible to indicate the minimal tropical halfspaces of
its tropical matroid polytope.

\begin{theorem} The tropical hypersimplex $\Delta_k^d$ in $\TA^d$ is the
  intersection of its cornered halfspaces and the tropical halfspaces
  $H(\bZero,I)$, where $I$ is a $(d-k+2)$-element subset of $[d+1]$.
\end{theorem}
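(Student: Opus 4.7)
The plan is to prove the equality $\Delta_k^d = (\text{cornered halfspaces}) \cap \bigcap_{|I|=d-k+2} H(\bZero,I)$ by verifying both inclusions directly. First I show that $\Delta_k^d$ is contained in each listed halfspace. Containment in the cornered halfspaces follows from Lemma~\ref{lem:chull of tmp}, since the cornered hull of $\Delta_k^d$ is $\Delta^d \supseteq \Delta_k^d$. For a halfspace $H(\bZero,I)$ with $|I|=d-k+2$, I use that every closed tropical halfspace is tropically convex, so it suffices to check the tropical vertices $v_J$, $|J|=k$. In canonical coordinates $v_{J,i}=0$ for $i\in J$ and $v_{J,i}=1$ otherwise. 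Since $|I|+|J|=d+2>d+1$, the sets $I$ and $J$ intersect, so the global minimum $0$ of $v_J$ is attained at some index in $I$, placing $v_J\in H(\bZero,I)$.

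For the reverse inclusion, let $x\in\TA^d$ lie in every listed halfspace. The cornered halfspaces force $x\in\Delta^d$, so in canonical coordinates $x\in[0,1]^{d+1}$ with $\min_i x_i=0$. Membership in each $H(\bZero,I)$ demands that this minimum $0$ be attained at some coordinate in $I$. Writing $Z:=\{i:x_i=0\}$, this says $Z$ meets every $(d-k+2)$-subset of $[d+1]$, equivalently $|Z^C|\leq k-1$, i.e.\ $|Z|\geq k$.

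The remaining step, which I expect to be the main obstacle, is to exhibit any such $x$ as a tropical combination of the generators $v_J$, $|J|=k$. For each $i\notin Z$, pick a $k$-subset $J_i\subseteq[d+1]$ with $J_i\ni i$ and $J_i\setminus\{i\}\subseteq Z$ (possible because $|Z|\geq k-1$), and set $\lambda_{J_i}:=x_i$. For every $k$-subset $J\subseteq Z$ set $\lambda_J:=0$; for all remaining $k$-subsets set $\lambda_J:=+\infty$ (unused). A short case check then verifies $x=\bigoplus_{J}\lambda_J\odot v_J$: at an index $\ell\in Z$, any $k$-subset $J\subseteq Z$ with $\ell\in J$ contributes $\lambda_J+v_{J,\ell}=0$, and no other term is negative; at an index $\ell\notin Z$, the term from $J_\ell$ contributes exactly $x_\ell$, while every other term contributes either $0+1=1$ (for $J\subseteq Z$) or $x_i+1$ (for $J_i$ with $i\neq\ell$, noting $\ell\notin J_i$ because $\ell\notin Z\cup\{i\}$), both at least $x_\ell$ since $x\in[0,1]^{d+1}$. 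This exhibits $x\in\Delta_k^d$ and completes the argument.
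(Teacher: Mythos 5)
Your proof is correct and takes a genuinely different, more elementary route than the paper. The paper proves this theorem by verifying, for each $H(\bZero,I)$, the three conditions of Proposition~1 of~\cite{GaubertKatz09} for the apex type $\type_V(\bZero)$; condition~(i) there simultaneously establishes $\Delta_k^d\subseteq H(\bZero,I)$ and the remaining two conditions give \emph{minimality} of each halfspace, which is part of what the paper is after but not strictly needed for the stated equality. The paper's reverse inclusion is then dispatched in two sentences by observing that a point outside $\Delta_k^d$ has some empty type coordinate $T_i$ and must therefore violate some $H(\bZero,I)$ with $i\in I^C$ — an argument left largely to the reader. Your approach skips Gaubert--Katz entirely: for the forward inclusion you reduce to the generators (using that a tropical halfspace is tropically convex) and the count $|I|+|J|>d+1$; for the reverse inclusion you extract from the cornered halfspaces that $x$ lies in $[0,1]^{d+1}$ with $\min_ix_i=0$, read off $|Z|\geq k$ from the $H(\bZero,I)$ constraints, and then \emph{explicitly exhibit} a tropical combination of the $v_J$ equal to $x$. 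I checked the case analysis in the last step (indices $\ell\in Z$ versus $\ell\notin Z$, using $\ell\notin J_i$ whenever $i\neq\ell$, $i,\ell\notin Z$, and the bound $x_\ell\leq 1$) and it is sound. What your argument buys is a self-contained, verifiable proof of the set equality without invoking the external characterization of minimal halfspaces and without the terse reverse-inclusion step; what it gives up is the bonus conclusion that each $H(\bZero,I)$ is in fact a minimal halfspace for $\Delta_k^d$, which the paper's route delivers for free.
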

\begin{proof}
  For $k=1$ the tropical standard simplex is a polytrope and coincides with its
  cornered hull. For $k\geq 2$ we want to verify the three conditions of Gaubert
and Katz in Proposition 1 of~\cite{GaubertKatz09}.

  Let $T=(T_1,\ldots,T_{d+1})$ be the type of
  the apex $\bZero$ of $H(\bZero,I)$. 
  If a vertex $v\in\V(\Delta_k^d)$ 
  appears in some type entry $T_i$, then the $i$-th (canonical) coordinate of $v$
  is equal to zero. Hence, exactly $k$ entries of $T$ contain the index of
  $v$. Since the cardinality of $I^C=[d+1]\setminus I$ is only $k-1$,
  every tropical vertex of $\Delta_k^d$ is contained in some sector $\overline{S_i}$ with
  $i\in I$, i.e. $\Delta_k^d\subseteq H(\bZero,I)$. 
  
  Consider the complement $I^C$ of $I$. For all $i\in I^C$ there is a tropical vertex $v$ with
  $v_i=0$, i.e. $v\in T_i$. Since the cardinality of $I^C$ is equal to $k-1$ and
  $v$ has $k$ entries equal to zero, there must be an index $j\in I$ such that
  $v_j=0$. We can conclude that $T_i\cap T_j\neq\emptyset$.

  The intersection $T_i\cap T_j$ is not empty for arbitrary $i,j\in[d+1]$, because its cardinality is equal to the number
  of tropical vertices $v$ with $v_i=v_j=0$, which is $\binom{d}{k-1}$ with
  $k>1$. 
  For $i\in I$ and $j\in I^C$, the set $T_i\cap T_j$ consists of all tropical vertices
  $v$ with $v_i=0$ and $v_j=1$ (in canonical coordinates).
  On the other hand, the set $\bigcup_{k\in I\setminus\{i\}}T_k$ contains all
  tropical vertices $v$
  with $v_i=1$. 
  So we get $T_i\cap T_j \not\subset \bigcup_{k\in I\setminus\{i\}}T_k$.

  Hence, we obtain that $H(\bZero,I)$ is a minimal tropical
halfspace, and $\Delta_k^d$ is contained in the intersection of its cornered
hull $\displaystyle \bigcap_{i\in[d+1]}H(e_i,\{i\})$ with $\displaystyle
\bigcap_{I\in\binom{[d+1]}{d-k+2}}H(\bZero,I)$.

We still have to prove that the intersection of the given minimal tropical
halfspaces is contained in $\Delta_k^d$. Let us assume that there is a point
$x\in\TA^d\setminus\Delta_k^d$ with $\type_{\Delta_k^d}(x)_i=\emptyset$. Then for any tropical halfspace $H(\bZero,I)$, $I\in\binom{[d+1]}{d-k+2}$, with $i\in I^C$
 we obtain $x\notin H(\bZero,I)$. 

Consequently, the tropical hypersimplex $\Delta_k^d$ is
the set of all points $x\in\TA^d$ satisfying
\begin{eqnarray*}
\displaystyle\bigoplus_{i\in I}  x_i&\leq&\bigoplus_{j\in I^C} x_j \text{ for
all }I\subseteq [d+1]\text{ with }\lvert I \rvert=d-k+2\\\text{and }
 (-1)\odot x_i&\leq&\bigoplus_{j\neq i} x_j  \text{ for all }i\in[d+1].
\end{eqnarray*}
\end{proof}

\begin{example}The second tropical hypersimplex $\Delta_2^3$ in $\TA^3$ is the intersection
  of the $4$ cornered halfspaces $(c_i,\{i\})$ for $i=1,\ldots,4$ and the tropical
  halfspaces $(\bZero,\{1,2,3\})$, $(\bZero,\{1,2,4\})$, $(\bZero,\{1,3,4\})$ and
  $(\bZero,\{2,3,4\})$ with apex $\bZero\in \TA^d$. 
  The second tropical hypersimplex $\Delta_2^2$ in $\TA^2$ is the intersection
  of the three cornered halfspaces $(c_i,\{i\})$ for $i=1,\ldots,3$ and the tropical
  halfspaces $(\bZero,\{1,2\})$, $(\bZero,\{1,3\})$ and
  $(\bZero,\{2,3\})$ with apex $\bZero\in \TA^2$, see Figure \ref{fig:Delta22}.
  \begin{figure}[htb]
    \centering
    \subfigure[\label{fig:Delta22a}]{\psfrag{v_1}{$(0,-1,-1)$}\psfrag{v_2}{$(-1,0,-1)$}\psfrag{v_3}{$(-1,-1,0)$}\includegraphics[scale=0.65]{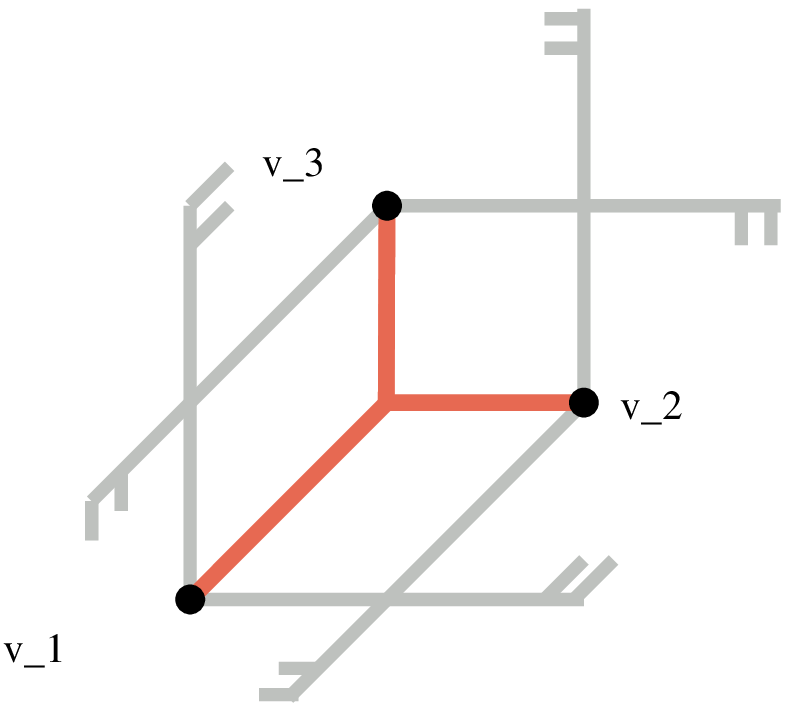}}\hspace*{1cm}
    \subfigure[\label{fig:Delta22b}]{\psfrag{v_1}{$(0,-1,-1)$}\psfrag{v_2}{$(-1,0,-1)$}\psfrag{v_3}{$(-1,-1,0)$}\includegraphics[scale=0.65]{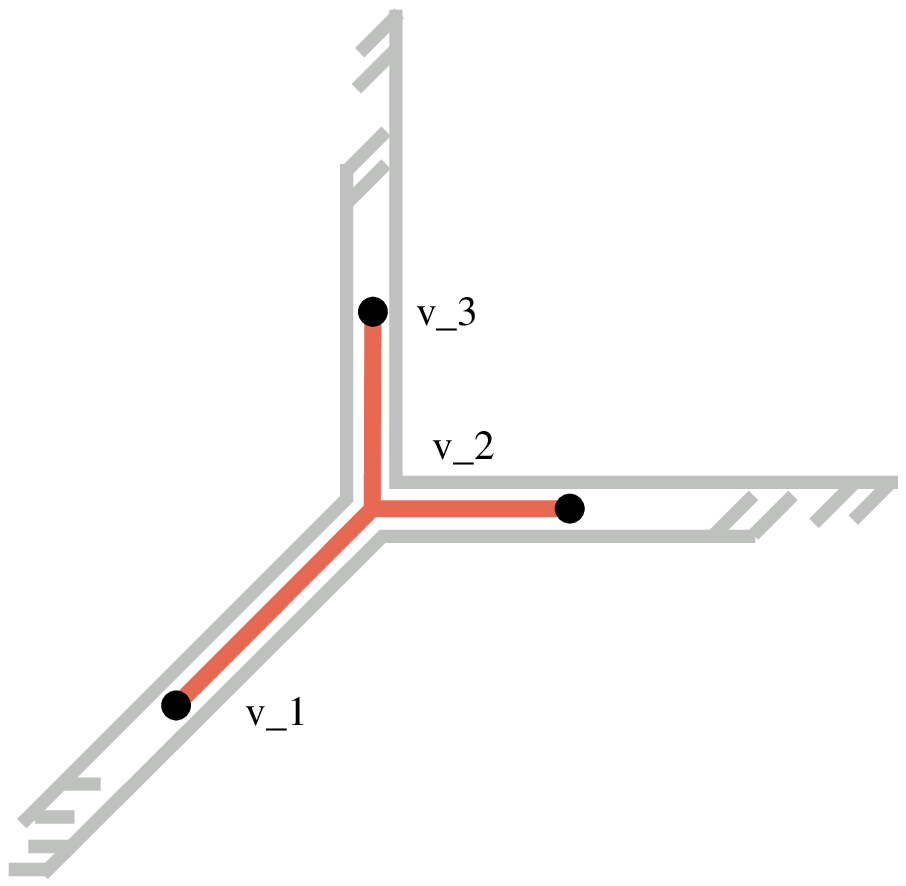}}
    \caption{\label{fig:Delta22}
The tropical hypersimplex $\Delta_2^2$ (dark) is given as the intersection of its
cornered halfspaces (light coloured in Figure~\ref{fig:Delta22a}) and the minimal tropical halfspaces
$(\bZero,\{1,2\})$, $(\bZero,\{1,3\})$  
(light coloured in Figure~\ref{fig:Delta22b}).}
  \end{figure}
\end{example}

\noindent\emph{Acknowledgements.} I would like to thank my advisor Michael
Joswig for suggesting the problem, and for
supporting me writing this article.

\bibliographystyle{amsplain}\def\cprime{$'$}

\begin{thebibliography}{10}

\bibitem{BPS1998}
Dave Bayer, Irena Peeva, and Bernd Sturmfels, \emph{Monomial resolutions},
  Math. Res. Lett. \textbf{5} (1998), no.~1-2, 31--46.

\bibitem{BS1998}
Dave Bayer and Bernd Sturmfels, \emph{Cellular resolutions of monomial
  modules}, J. Reine Angew. Math. \textbf{502} (1998), 123--140.

\bibitem{BlockYu}
Florian Block and Josephine Yu, \emph{Tropical convexity via cellular
  resolutions}, J. Algebraic Combin. \textbf{24} (2006), no.~1, 103--114.

\bibitem{DSS2005}
Mike Develin, Francisco Santos, and Bernd Sturmfels, \emph{On the rank of a
  tropical matrix}, Combinatorial and computational geometry, Math. Sci. Res.
  Inst. Publ., vol.~52, Cambridge Univ. Press, Cambridge, 2005, pp.~213--242.

\bibitem{DS04}
Mike Develin and Bernd Sturmfels, \emph{Tropical convexity}, Doc. Math.
  \textbf{9} (2004), 1--27 (electronic), correction: ibid., pp.\ 205--206.

\bibitem{DevelinYu06}
Mike Develin and Josephine Yu, \emph{Tropical polytopes and cellular
  resolutions}, Experiment. Math. \textbf{16} (2007), no.~3, 277--291.

\bibitem{DJS09}
Anton Dochtermann, Michael Joswig, and Raman Sanyal, \emph{Tropical types and
  associated cellular resolutions}, 2010, preprint \url{arXiv.org:1001.0237}.

\bibitem{Edmonds03}
Jack Edmonds, \emph{Submodular functions, matroids, and certain polyhedra},
  Combinatorial optimization---{E}ureka, you shrink!, Lecture Notes in Comput.
  Sci., vol. 2570, Springer, Berlin, 2003, pp.~11--26.

\bibitem{GaubertKatz09}
Stephane Gaubert and Ricardo~D. Katz, \emph{Minimal half-spaces and external
  respresentation of tropical polyhedra}, J. Algebraic Combin. (2010), 1--24,
  10.1007/s10801-010-0246-4.

\bibitem{GGMS1987}
I.~M. Gel{\cprime}fand, R.~M. Goresky, R.~D. MacPherson, and V.~V. Serganova,
  \emph{Combinatorial geometries, convex polyhedra, and {S}chubert cells}, Adv.
  in Math. \textbf{63} (1987), no.~3, 301--316.

\bibitem{Joswig05}
Michael Joswig, \emph{Tropical halfspaces}, Combinatorial and computational
  geometry, Math. Sci. Res. Inst. Publ., vol.~52, Cambridge Univ. Press,
  Cambridge, 2005, pp.~409--431.

\bibitem{Joswig08}
\bysame, \emph{Tropical convex hull computations}, Tropical and idempotent
  mathematics, Contemp. Math., vol. 495, Amer. Math. Soc., Providence, RI,
  2009, pp.~193--212.

\bibitem{JK08}
Michael Joswig and Katja Kulas, \emph{Tropical and ordinary convexity
  combined}, Advances in Geometry \textbf{10} (2010), 333--352.

\bibitem{JoswigSturmfelsYu07}
Michael Joswig, Bernd Sturmfels, and Josephine Yu, \emph{Affine buildings and
  tropical convexity}, Albanian J. Math. \textbf{1} (2007), no.~4, 187--211.

\bibitem{Oxley2003}
James Oxley, \emph{What is a matroid?}, Cubo Mat. Educ. \textbf{5} (2003),
  no.~3, 179--218.

\bibitem{White1986}
Neil White (ed.), \emph{Theory of matroids}, Encyclopedia of Mathematics and
  its Applications, vol.~26, Cambridge University Press, Cambridge, 1986.

\bibitem{White1987}
\bysame, \emph{Combinatorial geometries}, Encyclopedia of Mathematics
  and its Applications, vol.~29, Cambridge University Press, Cambridge, 1987.

\bibitem{White1992}
\bysame, \emph{Matroid applications}, Encyclopedia of Mathematics and
  its Applications, vol.~40, Cambridge University Press, Cambridge, 1992.

\end{thebibliography}
\providecommand{\bysame}{\leavevmode\hbox to3em{\hrulefill}\thinspace}
\providecommand{\MR}{\relax\ifhmode\unskip\space\fi MR }
\providecommand{\MRhref}[2]{
  \href{http://www.ams.org/mathscinet-getitem?mr=#1}{#2}
}
\providecommand{\href}[2]{#2}

\end{document}